\newcommand{\Z}{{\mathbb Z}}
\newcommand{\Gu}{\underline{G}}
\newcommand{\Q}{{\mathbb Q}}
\newcommand{\R}{{\mathbb R}}
\newcommand{\Ga}{\mathrm{Gal}}
\newtheorem{thm}{Theorem}[section]
\newtheorem{lemma}[thm]{Lemma}
\newtheorem{prop}[thm]{Proposition}
\newcommand{\no}{\vert\vert}
\begin{document}

\title[Strong Approximation]{On  strong approximation for algebraic groups}

\author[Andrei Rapinchuk]{Andrei S. Rapinchuk}

\address{Department of Mathematics, University of Virginia,
Charlottesville, VA 22904-4137, USA}

\email{asr3x@virginia.edu}

\maketitle

\section{Introduction}\label{S:I}

The goal of this article, which is an expanded version of the talk
given at the workshop, is to provide a survey of known results
related to strong approximation in algebraic groups. We will focus
primarily on two aspects: the classical form of strong
approximation, which is really strong approximation for
$S$-arithmetic groups (\S \ref{S:AG}), and its more modern version
for arbitrary Zariski-dense subgroups (\S \ref{S:ZD}). Along the way
we will also mention results dealing with strong approximation in
arbitrary varieties and particularly homogeneous spaces (which are
probably not so well known to the general audience as some other
results in the article) and  some applications. The reader will find
more applications of  strong approximation for Zariski-dense
subgroups in other articles in this volume.

\vskip2mm

\noindent {\bf 1. Strong approximation and congruences.} The most
elementary way to start thinking about strong approximation is in
terms of lifting solutions of integer polynomial equations
$\mathrm{mod}\: m$ for all $m \geqslant 1$, to integer solutions.
So, suppose we have a family of polynomials
$$
f_{\alpha}(x_1, \ldots , x_d) \in \Z[x_1, \ldots , x_d], \ \ \
\alpha \in I,
$$
and we let $X \subset \mathbb{A}_{\Z}^d$ denote the closed affine
subscheme defined by these polynomials. Thus, for any $\Z$-algebra
$R$, the scheme $X$ has the following set of $R$-points:
$$
X(R) = \{ (a_1, \ldots , a_d) \in R^d \ \vert \ f_{\alpha}(a_1,
\ldots , a_d) = 0 \ \ \text{for all} \ \ \alpha \in I \}.
$$
Then for any integer $m \geqslant 1$, we have a natural reduction
modulo $m$ map
$$
\rho_m \colon X(\Z) \to X(\Z/m\Z),
$$
and the question is whether these maps are {\it surjective} for all
$m$. (Of course, this question is meaningful only if we assume that
$X(\Z/m\Z) \neq \emptyset$ for all $m$.) Observe that for $m \mid
n$, there is a canonical homomorphism $\Z/n\Z \to \Z/m\Z$, hence a
natural map $\pi^n_m \colon X(\Z/n\Z) \to X(\Z/m\Z)$. Clearly, $\{
\: X(\Z/m\Z) \: , \: \pi^n_m \: \}$ is an inverse system, so we can
assemble all the $X(\Z/m\Z)$'s together by taking the inverse limit:
$$
\lim_{\longleftarrow} X(\Z/m\Z) = X(\hat{\Z}),
$$
where $\displaystyle \hat{\Z} = \lim_{\longleftarrow} \Z/m\Z$.
Recall that the Chinese Remainder Theorem furnishes an isomorphism
$\hat{\Z} \simeq \prod_p \Z_p$, where $\Z_p$ is the ring of $p$-adic
integers and the product is taken over all primes, which allows us
to identify $X(\hat{\Z})$ with $\prod_p X(\Z_p)$.

Just as above, for any integer $m \geqslant 1$, there is a natural
map
$$
\hat{\rho} \colon X(\hat{\Z}) \longrightarrow X(\hat{\Z}/m\hat{\Z})
= X(\Z/m\Z).
$$
The pre-images of points under the $\hat{\rho}_m$'s form a basis of
a natural topology on $X(\hat{\Z})$, which coincides with either of
the following topologies:

\vskip1mm

$\bullet$ \parbox[t]{11.5cm}{the topology of the inverse limit on
$\displaystyle \lim_{\longleftarrow} X(\Z/m\Z)$, cf. \cite[Ch.I,
5.3]{KNV};}

\vskip1mm

$\bullet$ \parbox[t]{11.5cm}{the topology induced by the embedding
$X(\hat{\Z}) \hookrightarrow \hat{\Z}^d$, where $\hat{\Z}$ is
endowed with the inverse limit topology on $\displaystyle
\lim_{\longleftarrow} \Z/m\Z$;}

\vskip1mm

$\bullet$ \parbox[t]{11.5cm}{the direct product topology on $\prod_p
X(\Z_p)$, where $X(\Z_p)$ receives its topology via the embedding
$X(\Z_p) \hookrightarrow \Z_p^d$, and $\Z_p$ is endowed with the
natural $p$-adic topology.}

\vskip2mm

\noindent The following immediately follows from the above
discussion.
\begin{lemma}\label{L:1}
The following conditions are equivalent:

\vskip1mm

{\rm (1)} $\rho_m \colon X(\Z) \to X(\Z/m\Z)$ is surjective for all
integers $m \geqslant 1$;

\vskip1mm

{\rm (2)} \parbox[t]{11cm}{the natural embedding $\iota \colon X(\Z)
\hookrightarrow X(\hat{\Z})$ has a dense image in the above
topology.}
\end{lemma}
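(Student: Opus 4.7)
The plan is a direct unraveling of definitions, organized around the evident factorization $\rho_m = \hat\rho_m \circ \iota$, which is immediate from the construction: $\iota$ embeds $X(\Z) \subset \Z^d$ into $X(\hat\Z) \subset \hat\Z^d$, while $\hat\rho_m$ is induced coordinate-wise by the quotient $\hat\Z \twoheadrightarrow \hat\Z/m\hat\Z = \Z/m\Z$. From the description of the topology on $X(\hat\Z)$ recalled above, a basis of open sets is given by the fibers $\hat\rho_m^{-1}(y)$ for $m \geqslant 1$ and $y \in X(\Z/m\Z)$ (the target being discrete), so density of $\iota(X(\Z))$ amounts to saying that every nonempty such fiber meets $\iota(X(\Z))$.

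For $(1)\Rightarrow(2)$, I would take a nonempty open set $U \subseteq X(\hat\Z)$ and a point $\xi \in U$, shrink to a basic neighborhood $\hat\rho_m^{-1}(y) \subseteq U$ of $\xi$ (where $y := \hat\rho_m(\xi)$, so $y \in X(\Z/m\Z)$), and then invoke (1) to produce $x \in X(\Z)$ with $\rho_m(x) = y$. The factorization then forces $\iota(x) \in \hat\rho_m^{-1}(y) \subseteq U$, so $\iota(X(\Z))$ meets every nonempty open set.

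Conversely, for $(2)\Rightarrow(1)$, I would pick $y \in X(\Z/m\Z)$; since $\rho_m$ factors through $\hat\rho_m$, condition (1) is meaningful only when $y$ lifts to some $\xi \in X(\hat\Z)$, which is the implicit compatibility built into the lemma (and automatic from Hensel's lemma when $X$ is smooth, as in the algebraic-group applications later in the paper). The fiber $\hat\rho_m^{-1}(y)$ is then a nonempty open neighborhood of $\xi$, and density of $\iota(X(\Z))$ supplies $x \in X(\Z)$ with $\iota(x) \in \hat\rho_m^{-1}(y)$; applying $\hat\rho_m$ yields $\rho_m(x) = y$. Since the whole argument is a formal translation, the only step deserving any comment is this tacit lifting in the converse direction; there is no genuine obstacle beyond correctly identifying the basic open sets of $X(\hat\Z)$.
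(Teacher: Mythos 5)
Your argument is correct and is exactly the formal unraveling that the paper has in mind: the text gives no separate proof, stating that the lemma ``immediately follows'' from the preceding description of the topology on $X(\hat{\Z})$ via the fibers of the maps $\hat{\rho}_m$, which is precisely your factorization $\rho_m = \hat{\rho}_m \circ \iota$. Your remark that $(2)\Rightarrow(1)$ tacitly requires every point of $X(\Z/m\Z)$ to lift to $X(\hat{\Z})$ (i.e.\ surjectivity of $\hat{\rho}_m$, which is not automatic for an inverse limit of finite sets) is a legitimate point of care that the paper simply suppresses under the blanket assumption that the question is ``meaningful.''
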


In this situation, we say $X$ has {\it strong approximation} if it
satisfies the equivalent conditions of Lemma \ref{L:1} (of course,
this is only a first approximation to the precise definition(s) of
strong approximation that will be given later, cf. \S~2.1).
Intuitively, strong approximation should not be very common as there
are plentiful examples where $X(\hat{\Z}) \neq \emptyset$ but $X(\Z)
= \emptyset$ (i.e., the Hasse principle fails - note that here we
actually omit the archimedean place of $\Q$), and also examples
where $X(\Z)$ is nonempty but so ``small" that it cannot possibly be
dense in $X(\hat{\Z})$. A classical example of the second situation
is a cubic hypersurface $X \subset \mathbb{A}^3$ given by the
equation
$$
3x^3 + 4y^3 + 5z^3 = 0;
$$
it is known that $X(\Z) = \{(0, 0, 0)\}$ but $X(\Z_p) \neq \{ (0, 0,
0) \}$ (hence infinite as any point on $X$ other than the origin is
smooth) for all prime $p$. In fact, very little appears to be known
about strong approximation for schemes (varieties) lying outside
some special classes such as homogeneous spaces - one can only give
some {\it necessary conditions} (cf. Proposition \ref{P:Minchev} and
subsequent remarks). So, in this article we will deal almost
exclusively with algebraic groups.

\vskip2mm

\noindent {\bf 2. $\mathbf{\mathrm{SL}_2}$ vs.
$\mathbf{\mathrm{GL}_2}$.} Let us start with two elementary
examples: $G_1 = \mathrm{SL}_2$ and $G_2 = \mathrm{GL}_2$. One
doesn't see much of a difference between these examples just by
looking at the defining equations. Indeed, with the obvious labeling
of coordinates, we see that

\vskip2mm

\noindent $\bullet$ \parbox[t]{12cm}{$G_1$ can be realized as a
hypersurface in $\mathbb{A}^4$ given by $x_{11}x_{22} - x_{12}x_{21}
= 1$; \vskip1mm  and}

\vskip2mm

\noindent $\bullet$ \parbox[t]{12.3cm}{$G_2$ can be realized as a
hypersurface in $\mathbb{A}^5$ given by $y(x_{11}x_{22} -
x_{12}x_{21}) = 1$.}

\vskip2mm

\noindent However, $G_1$ has strong approximation, and $G_2$ does
not.

\vskip1mm

\begin{lemma}\label{L:2}
For any $m > 1$, the reduction modulo $m$ map
$$
\rho_m \colon SL_2(\Z) \longrightarrow SL_2(\Z/m\Z)
$$
is surjective.
\end{lemma}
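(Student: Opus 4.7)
The plan is to establish surjectivity by showing that both $SL_2(\Z)$ and $SL_2(\Z/m\Z)$ are generated by the elementary unipotent matrices
$$
e_{12}(t) = \begin{pmatrix} 1 & t \\ 0 & 1 \end{pmatrix} \quad \text{and} \quad e_{21}(t) = \begin{pmatrix} 1 & 0 \\ t & 1 \end{pmatrix},
$$
with $t$ ranging over the ring in question. Granting this, surjectivity of $\rho_m$ is immediate: any integer $t$ reducing to $\bar t$ modulo $m$ gives a lift $e_{ij}(t) \in SL_2(\Z)$ of $e_{ij}(\bar t)$, and since $\rho_m$ is a group homomorphism, every product of generators in $SL_2(\Z/m\Z)$ is the reduction of the corresponding product of lifts in $SL_2(\Z)$.

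The claim for $SL_2(\Z)$ itself is the classical Euclidean algorithm: left- and right-multiplication by the $e_{ij}(t)$'s carry out elementary row and column operations $R_i \mapsto R_i + t R_j$, so the Euclidean algorithm applied to the first column of any $A \in SL_2(\Z)$ reduces $A$ to an upper-triangular matrix; using $\det A = 1$, a further operation brings $A$ to $\pm I$, and $-I = w^2$ where $w := e_{12}(1) e_{21}(-1) e_{12}(1) = \begin{pmatrix} 0 & 1 \\ -1 & 0 \end{pmatrix}$.

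The analogous statement over $\Z/m\Z$ is the substantive step, because this ring is neither a domain nor Euclidean. Given $A = \begin{pmatrix} a & b \\ c & d \end{pmatrix}$ with $ad - bc = 1$ in $\Z/m\Z$, the identity $d a + (-b) c = 1$ shows that the ideal $(a,c)$ equals the unit ideal in $\Z/m\Z$; hence by the Chinese Remainder Theorem applied to the prime-power factorization of $m$, at each prime $p \mid m$ at least one of $a, c$ is a unit mod $p$, and one can find $\lambda \in \Z/m\Z$ making $a + \lambda c$ nonzero modulo every prime dividing $m$ simultaneously, i.e., a unit $u \in \Z/m\Z$. Left-multiplication by $e_{12}(\lambda)$ then places $u$ in the $(1,1)$-position, after which further elementary operations clear the $(2,1)$- and $(1,2)$-entries, reducing $A$ to $\mathrm{diag}(u, u^{-1})$. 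Finally, the identity $\mathrm{diag}(u, u^{-1}) = e_{12}(u) e_{21}(-u^{-1}) e_{12}(u) \cdot w^{-1}$ expresses this diagonal matrix as a product of elementaries, completing the argument.

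The main obstacle is precisely the Chinese-Remainder-based unit-creation step: naive column reduction in $\Z/m\Z$ can stall because two entries of a column may both be nonzero zero-divisors with no common reducing factor, and the crucial input is that $(a,c)$ being the unit ideal allows one to force a unit entry by a single elementary move.
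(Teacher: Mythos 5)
Your proof is correct and follows essentially the same route as the paper's: write an arbitrary element of $SL_2(\Z/m\Z)$ as a product of elementary matrices and lift each factor to $SL_2(\Z)$. The only (harmless) variations are that you deploy the Chinese Remainder Theorem to create a unit entry directly over $\Z/m\Z$ rather than first splitting $SL_2(\Z/m\Z)$ into its prime-power factors, and that your opening paragraph on generation of $SL_2(\Z)$ by elementaries is not actually needed for surjectivity.
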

\begin{proof}
The argument does not use equations (in fact, it is not a completely
trivial task to prove strong approximation using equations in this
case - see the discussion after Proposition \ref{P:HS1}). The
crucial observation is that any $\bar{g} \in SL_2(\Z/m\Z)$ can be
written as a product of elementary matrices:
\begin{equation}\label{E:1}
\bar{g} = \prod_k e_{i_k j_k}(\bar{a}_k) \ \ \text{with} \ \ (i_k \;
, \; j_k) \in \{(1 \; , \; 2) \: , \: (2 \; , \; 1)\} \ \ \text{and}
\ \ \bar{a}_k \in \Z/m\Z.
\end{equation}
(As usual, for $i \neq j$, we let $e_{ij}(a)$ denote the elementary
matrix having $a$ as its $ij$-entry.) For this, one needs to observe
that if $m = p_1^{\alpha_1} \cdots p_r^{\alpha_r}$ then it follows
from the Chinese Remainder Theorem that
$$
SL_2(\Z/m\Z) = SL_2(\Z/p_1^{\alpha_1}\Z) \times \cdots \times
SL_2(\Z/p_r^{\alpha_r}\Z),
$$
which reduces the problem to the case where $m = p^{\alpha}$. Now,
given
$$
\bar{g} = \left( \begin{array}{cc} x_{11} & x_{12} \\ x_{21} &
x_{22} \end{array} \right) \ \in \ SL_2(\Z/p^{\alpha}\Z),
$$
we see that either $x_{11}$ or $x_{12}$ is a unit $\mathrm{mod}\:
p^{\alpha}$, so using Gaussian elimination one can easily write
$\bar{g}$ as a product of elementaries over $\Z/p^{\alpha}\Z$.

Next, given an arbitrary  $\bar{g} \in SL_2(\Z/m\Z)$, pick a
factorization (\ref{E:1}), and furthermore, for each $k$ pick an
integer $a_k$ in the class $\bar{a}_k$ modulo $m$. Set
$$
g = \prod_k e_{i_k j_k}(a_k) \in SL_2(\Z).
$$
Then $\rho_m(g) = \bar{g}$, proving the surjectivity of $\rho_m$.
\end{proof}

Note that the proof of Lemma \ref{L:2} relies on the consideration
of unipotent elements, so it is worth pointing out  that, as we will
see in the course of this article, unipotent elements are involved
in one way or another in most known results on strong approximation
(even when the group at hand does not contain any nontrivial
rational unipotent elements, i.e. is anisotropic).

\vskip2mm

The fact that $G_2 = \mathrm{GL}_2$ does not have strong
approximation is much easier: in fact, already the map
$$
\rho_5 \colon GL_2(\Z) \longrightarrow GL_2(\Z/5\Z)
$$
fails to be surjective. (Indeed, since all matrices in $GL_2(\Z)$
have determinant $\pm 1$, the matrices in $\rho_5(GL_2(\Z))$ have
determinant $\pm 1(\mathrm{mod}\: 5)$, and therefore, for example,
$\left( \begin{array}{cc} \bar{1} & \bar{0} \\ \bar{0} & \bar{2}
\end{array} \right) \in GL_2(\Z/5\Z)$ does not lie in the image of
$\rho_5$.) One can conceptually articulate the obstruction that
prevents $\mathrm{GL}_2$ from having strong approximation in this
case by saying that in order for an affine $\Q$-variety $X$ to have
strong approximation,

\vskip2mm

\centerline{$X(\Z)$ \ must \ be \ Zariski-dense \ in \ $X$.}

\vskip2mm

\noindent Indeed, let $Y = \overline{X(\Z)}$ be the Zariski-closure
of $X(\Z)$ in $X$, and assume that $Y \neq X$. Pick a point $a \in
X(\bar{\Q}) \setminus Y(\bar{\Q})$, where $\bar{\Q}$ is an algebraic
closure of $\Q$. Then one can find a polynomial $f \in \Z[x_1,
\ldots , x_d]$ that vanishes on $Y$ and such that $f(a) \neq 0$. It
follows from Tchebotarev's Density Theorem that for infinitely many
primes $p$, we have $a \in X(\Z_p)$ and $f(a) \not\equiv
0(\mathrm{mod}\: p)$. Let $\bar{a} \in X(\mathbb{F}_p)$ be the
reduction of $a$ modulo $p$, where $\mathbb{F}_p = \Z/p\Z =
\Z_p/p\Z_p$. (Note that it would be more appropriate to write
$\underline{X}^{(p)}(\mathbb{F}_p)$ instead of $X(\mathbb{F}_p)$,
where $\underline{X}^{(p)}$ denotes the reduction of $X$ modulo $p$,
but we will slightly abuse the notations in this introductory
section in order to keep them simple.) Then clearly
$$
\bar{a} \in X(\mathbb{F}_p) \setminus Y(\mathbb{F}_p),
$$
and in particular, $X(\mathbb{F}_p) \neq Y(\mathbb{F}_p)$. On the
other hand, the image of the reduction map $\rho_p \colon X(\Z) \to
X(\mathbb{F}_p)$ is obviously contained in $Y(\mathbb{F}_p)$. Thus,
if $X(\Z)$ is not Zariski-dense  in $X$ then $\rho_p$ fails to be
surjective for infinitely many $p$, which certainly prevents $X$
from having strong approximation. (Incidentally, this observation
implies that if $G$ is an algebraic $\Q$-group and $G(\Z)$ is not
Zariski-dense in $G$ then the closure of $G(\Z)$ in $G(\hat{\Z})$ is
of infinite index.)

In fact, the conclusion about the absence of strong approximation in
$X$ as above can be made even sharper. First, it is easy to show
that $X$ cannot possibly have strong approximation unless it is {\it
absolutely irreducible} (cf. the remark after Proposition
\ref{P:Minchev}). So, assume that $X$ is such. Then by the Lang-Weil
estimates (cf. \cite{LaWe}) we have
$$
\vert X(\mathbb{F}_p) \vert \approx p^{\dim X}
$$
for $p$ sufficiently large. Similarly, for any proper
$\Q$-subvariety $Y \subset X$,  the cardinality $\vert
Y(\mathbb{F}_p) \vert$ is bounded above by an expression of the form
$C \cdot p^{\dim Y}$ where $C$ is a constant independent of $p$. It
follows that $Y(\mathbb{F}_p) \neq X(\mathbb{F}_p)$ for almost all
$p$, and therefore unless $X(\Z)$ is Zariski-dense in $X$, the
reduction map $\rho_p \colon X(\Z) \to X(\mathbb{F}_p)$ is not
surjective for {\it almost all} $p$.

\vskip2mm

So, the fact that $GL_2(\Z)$ is not Zariski-dense in $\mathrm{GL}_2$
(its Zariski-closure is precisely the subgroup consisting of $g \in
\mathrm{GL}_2$ that satisfy $(\det g)^2 - 1 = 0$), is definitely one
of the factors that prevent $\mathrm{GL}_2$ from having strong
approximation; in fact, the reduction maps $\rho_p$ are
nonsurjective for all $p \geqslant 5$. Now, let us slightly change
the set-up by replacing the ring of integers $\Z$ with some
localization, e.g. $\Z\left[\frac{1}{2}\right]$. Then $
GL_2\left(\Z\left[\frac{1}{2}\right]\right)$ is already
Zariski-dense in $GL_2$, and in fact the map
$$
\rho_5 \colon GL_2\left(\Z\left[\frac{1}{2}\right] \right)
\longrightarrow GL_2(\Z/5\Z)
$$
is  surjective, however the map
$$
\rho_{17} \colon GL_2\left(\Z\left[\frac{1}{2}\right] \right)
\longrightarrow GL_2(\Z/17\Z)
$$
is not. The reason is that the possible determinants of matrices in
$GL_2\left(\Z\left[\frac{1}{2}\right]\right)$ are of the form $\pm
2^{\ell}$ with $\ell \in \Z$, hence squares modulo $p = 17$ (in
fact, this property will hold for any prime of the form $8k + 1$,
and by Dirichlet's Theorem there are infinitely many such primes,
cf. \S~2.2).

We see that Zariski-density is definitely not sufficient for strong
approximation in the general case. At the same time, let us consider
the following example involving various subgroups of the group
$SL_2(\Z)$. We have
$$
\Gamma_0 := SL_2(\Z) = \left\langle \ \left(\begin{array}{cc} 1 & 1 \\
0 & 1  \end{array} \right)  \ , \ \left(\begin{array}{cc} 1 & 0 \\
1 & 1 \end{array} \right) \ \right\rangle.
$$
For $\ell \geqslant 1$, we define
$$
\Gamma_{\ell} = \left\langle \ \left(\begin{array}{cc} 1 & 2^{\ell} \\
0 & 1  \end{array} \right)  \ , \ \left(\begin{array}{cc} 1 & 0 \\
2^{\ell} & 1 \end{array} \right) \ \right\rangle.
$$
Then we have the following inclusions
$$
\Gamma_0 \supset \Gamma_1 \supset \Gamma_2 \supset \cdots \supset
\Gamma_{\ell} \supset \Gamma_{\ell + 1} \supset \cdots  ,
$$
with
$$
[\Gamma_0 : \Gamma_1] = 12 \ \ \ \text{and} \ \ \ [\Gamma_{\ell} :
\Gamma_{\ell + 1}] = \infty \ \  \text{for}  \ \ \ell \geqslant 1.
$$
(We note that the fastest way to verify both of these claims is to
use the {\it virtual} Euler-Poincar\'e characteristic (cf.
\cite{Serre-cohom}). It is known that the Euler-Poincar\'e
characteristic $\chi(\Gamma_0) = - \frac{1}{12}$. On the other hand,
for any $m \geqslant 2$ the matrices $\left(
\begin{array}{cc} 1 & m \\ 0 & 1 \end{array}\right)$ and $\left(
\begin{array}{cc} 1 & 0 \\ m & 1 \end{array}\right)$ generate a
rank 2 free subgroup $\Delta_m \subset \Gamma_0$ (cf. \cite[p.
26]{delaHarpe}), so $\chi(\Delta_m) = -1$. It is an elementary
exercise to show that $\Gamma_1 = \Delta_2$ contains the congruence
subgroup $SL_2(\Z , 4)$ modulo 4, so the index $d = [\Gamma_0 :
\Gamma_1]$ is finite. So we have
$$
\chi(\Gamma_1) = d \cdot \chi(\Gamma_0),
$$
whence $d = 12$, as claimed. On the other hand, the assumption that
$[\Gamma_{\ell} : \Gamma_{\ell+1}] =: d < \infty$ would imply that
$$
-1 = \chi(\Gamma_{\ell+1}) = d \cdot \chi(\Gamma_{\ell}) = - d,
$$
i.e. $\Gamma_{\ell+1} = \Gamma_{\ell}$ which is clearly false
(consider the reduction modulo $2^{\ell + 1})$. Incidentally, the
same argument shows that $\Delta_m$ is of infinite index in
$\Gamma_0$ for {\it any} $m \geqslant 3$. Indeed, we  can now assume
that $m$ is not a power of $2$. If $[\Gamma_0 : \Delta_m] = d <
\infty$ then
$$
-1 = \chi(\Delta_m) = d \cdot \chi(\Gamma_0) = - \frac{d}{12},
$$
implying that $d = 12$. But $\Delta_m$ is contained in the
congruence subgroup $SL_2(\Z , m)$, so if $p$ is an odd prime
divisor of $m$ then
$$
[\Gamma_0 : \Delta_m] \geqslant [\Gamma_0 : SL_2(\Z , m)] \geqslant
\vert SL_2(\mathbb{F}_p) \vert = p(p^2 - 1) \geqslant 24,
$$
a contradiction. We note that the group $\Delta_3 = \left\langle
\left( \begin{array}{cc} 1 & 3 \\ 0 & 1 \end{array} \right) , \left(
\begin{array}{cc} 1 & 0 \\ 3 & 1 \end{array} \right) \right\rangle$
has received a lot of attention during the workshop.)

\vskip2mm

So, for large $\ell$, the subgroup $\Gamma_{\ell}$ is very ``thin"
in $\Gamma_0$, and essentially the only property it retains is
Zariski-density. Nevertheless, for all {\it odd} $m$ we still have
$$
\rho_m(\Gamma_{\ell}) = \rho_m(\Gamma_0) = SL_2(\Z/m\Z).
$$
So, if we ignore $p = 2$ (more precisely, the dyadic component
$\Z_2$ of $\hat{\Z}$), then we still have an analog of the property
of strong approximation for $\Gamma_{\ell}$, for {\it any} $\ell
\geqslant 1$. At the same time, the closure of $\Gamma_{\ell}$ in
$SL_2(\Z_2)$ is open (cf. Lemma  \ref{L:SA} for a more general
statement). Thus, we eventually obtain that the closure of
$\Gamma_{\ell}$ in $SL_2(\hat{\Z})$ is open -- one should think of
this property as being the next best thing to strong approximation.
Note that for a general $X$ as above, the openness of the closure of
$X(\Z)$ in $X(\hat{\Z})$ implies that the reduction maps $\rho_m
\colon X(\Z) \to X(\Z/m\Z)$ are surjective for all $m$ co-prime to
some fixed exceptional number $N_0 = N_0(X)$.

\vskip2mm

To summarize this discussion, we see that generally speaking the
idea that in certain situations Zariski-density should (or may)
imply some version of strong approximation, at least for subgroups,
appears to be sound, but in order to make it more precise, we need
to figure out what is wrong with $GL_2$ (compared to $SL_2$).

Before we do this, however, we would like to generalize our set-up
and also describe a somewhat different (although closely related)
approach to strong approximation. The issue is that typically an
algebraic group does not come with a fixed geometric (or linear)
realization $G \hookrightarrow \mathrm{GL}_n$, and different
realizations may result in different groups of integral points. So,
it makes sense to reformulate the property of strong approximation
in terms of the {\it group of rational points}.

\section{Strong approximation in algebraic groups and homogeneous
spaces}\label{S:AG}

\noindent {\bf 1. Adele groups and strong approximation.} Let $G$ be
an algebraic group defined over a global field $K$, and let $S$ be a
set of places of $K$. For now, we fix a matrix realization $G
\hookrightarrow \mathrm{GL}_n$, which enables us to define
unambiguously the groups
$$
G(\mathcal{O}_v) = G \: \bigcap \: GL_n(\mathcal{O}_v)
$$
for all nonarchimedean places $v$ of $K$, where $\mathcal{O}_v$ is
the valuation ring in the completion $K_v$. We let $\mathbb{A}_S$
denote the {\it ring of $S$-adeles} of $K$, and let
$$
G(\mathbb{A}_S) = \left\{ \: g = (g_v) \in \prod_{v \notin S} G(K_v)
\ \vert \ g_v \in G(\mathcal{O}_v) \ \ \text{for almost all} \ \ v
\notin S \right\}
$$
be the {\it group of $S$-adeles} of $G$. We refer the reader to
\cite[\S 5.1]{PlR} for a more detailed discussion of adeles, and in
particular for the definition of the space of $S$-adeles
$X(\mathbb{A}_S)$ for any affine algebraic $K$-variety $X$. Here we
recall only that $G(\mathbb{A}_S)$ is made into a locally compact
topological group by taking the open subgroups of $\prod_{v \notin
S} G(\mathcal{O}_v)$ for a fundamental system of neighborhoods of
the identity - thus, the $S$-adelic topology on $G(\mathbb{A}_S)$ is
the ``natural extension" of the product topology on $\prod_{v \notin
S} G(\mathcal{O}_v)$. (We note that in the case $K = \Q$, $S = \{
\infty \}$, the latter group coincides with $\prod_{p} G(\Z_p) =
G(\hat{\Z})$, so these adelic definitions are direct generalizations
of the notions we discussed in \S \ref{S:I}.) One proves (cf.
\cite[\S 5.1]{PlR}) that the topological group $G(\mathbb{A}_S)$ is
independent of the choice of a $K$-realization $G \hookrightarrow
\mathrm{GL}_n$. Furthermore, there is a canonical embedding $G(K)
\hookrightarrow G(\mathbb{A}_S)$, so we can give the following.

\vskip2mm

\noindent {\bf Definition.} An algebraic $K$-group $G$ has {\it
strong approximation} with respect to $S$ if $G(K)$ is dense in
$G(\mathbb{A}_S)$.

\vskip2mm

(Of course, one can give a similar definition for an arbitrary
affine $K$-variety $X$. We note that if $S = \emptyset$ then $X(K)$
is a closed discrete subspace of $X(\mathbb{A}_S)$, so in discussing
strong approximation one actually needs to assume from the outset
that $S$ is nonempty.)

\vskip2mm

Defined this way (in terms of rational points), the property of
strong approximation does not depend on the choice of a matrix
realization $G \hookrightarrow \mathrm{GL}_n$. On the other hand, in
the case where $S$ contains all nonarchimedean places, its validity
implies  that for {\it any} realization, the group
$G(\mathcal{O}(S))$ of points over the ring of $S$-integers
$\mathcal{O}(S)$, which can alternatively be described as
$$
G(\mathcal{O}(S)) = G(K) \: \bigcap \: \prod_{v \notin S}
G(\mathcal{O}_v),
$$
is dense in $\prod_{v \notin S} G(\mathcal{O}_v)$ (thus, we have
strong approximation in the sense discussed in \S \ref{S:I} for any
realization).

\vskip2mm

\noindent {\bf 2. Absence of strong approximation in algebraic
tori.} Our next goal is to explain why $\mathrm{GL}_2$ has no chance
to possess strong approximation. However, it is  easiest to pin down
the reason by working with the 1-dimensional $T = \mathbb{G}_m$: we
will now show that it does not have strong approximation with
respect to any finite set of places $S$, and will then demonstrate
how the same phenomenon manifests itself in the case of $GL_2$ and
other situations.

Let us start with the case $K = \Q$. If $S = \{ \infty \}$ then
$T(\Z) = \{ \pm 1 \}$ which is not even Zariski-dense. For $S = \{
\infty , 2 \}$, we have
$$
T\left(\Z\left[\frac{1}{2}\right]\right) = \pm \langle 2 \rangle,
$$
which is already Zariski-dense, but nevertheless $T$ still does not
have strong approximation. Indeed, pick any prime $p$ of the form
$8k + 1$. Then $-1$ and $2$ are squares modulo $p$, so the map
$$
\pm \langle 2 \rangle \to \left( \Z/p\Z \right)^{\times}
$$
is {\it not} surjective. What really happens here is that $T$
possesses a 2-sheeted cover
$$
\pi \colon T \to T, \ \ \ t \mapsto t^2,
$$
and for any prime $p \equiv 1(\mathrm{mod}\: 8)$ we have that
$$
T\left(\Z\left[ \frac{1}{2} \right]\right) \subset \pi(T(\Z_p))
\subsetneqq  T(\Z_p).
$$
Since $\pi(T(\Z_p)) \subset T(\Z_p)$ is a closed subgroup, we obtain
that $T\left(\Z\left[\frac{1}{2}\right]\right)$ is {\it not} dense
in $T(\Z_p)$ for any such $p$. Moreover, by Dirichlet's Prime Number
Theorem, for any $r \geqslant 1$ we can find $r$ distinct primes
$p_1, \ldots , p_r$ congruent to $1(\mathrm{mod}\: 8)$. Then the
image of the map
$$
\pm \langle 2 \rangle \to \left(\Z/p_1 \cdots p_r \Z
\right)^{\times}
$$
is contained in ${\left(\Z/p_1 \cdots p_r\Z\right)^{\times}}^2$,
which has index $2^r$ in $\left(\Z/p_1 \cdots
p_r\Z\right)^{\times}$. It follows that the closure of
$T\left(\Z\left[\frac{1}{2}\right]\right)$ in $T(\hat{\Z}) = \prod_p
T(\Z_p)$ is {\it of infinite index}.

This approach easily generalizes. First, let $T = \mathbb{G}_m$ over
an arbitrary number field $K$, and let $S$ be an arbitrary finite
set of places of $K$ containing all archimedean ones. Then by
Dirichlet's Unit Theorem (cf. \cite[p.~105]{Lang-NT}), the group
$T(\mathcal{O}(S))$ is generated by a finite collection of elements,
say $t_1, \ldots , t_r$. Set $L = K(\sqrt{t_1}, \ldots ,
\sqrt{t_r})$. Then by Tchebotarev's Density Theorem (cf.
\cite[p.~169]{Lang-NT}), there exist infinitely many places $v
\notin S$ that totally split in $L$ (i.e., $L \subset K_v$).
Considering again the covering $\pi \colon T \to T$, $\pi(t) = t^2$,
we see that that for any such nondyadic $v$ we have the following
inclusions
$$
T(\mathcal{O}(S)) \subset \pi(T(\mathcal{O}_v)) \subsetneqq
T(\mathcal{O}_v).
$$
This implies that the closure of $T(\mathcal{O}(S))$ in $\prod_{v
\notin S} T(\mathcal{O}_v)$ is of infinite index, and therefore the
closure of $T(K)$ in $T(\mathbb{A}_S)$ is of infinite index as well.

\vskip2mm

Next, this argument can be extended to an arbitrary torus $T$ over a
global field $K$ and any finite set $S$ of places of $K$. Moreover,
by considering coverings (isogenies) $\pi_m \colon T \to T$,
$\pi_m(t) = t^m$ for various $m$ prime to $\mathrm{char}\: K$, one
proves the following.
\begin{prop}\label{P:tori}
Let $T$ be a nontrivial torus over a global field $K$, and $S$ be a
finite set of places of $K$. If $\overline{T(K)}$ is the closure of
$T(K)$ in $T(\mathbb{A}_S)$ then the quotient
$$
T(\mathbb{A}_S) / \overline{T(K)}
$$
is a group of infinite exponent.
\end{prop}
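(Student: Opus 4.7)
The plan is to imitate, for an arbitrary nontrivial torus, the argument sketched for $\mathbb{G}_m$ immediately above, but applied not just to the single covering $\pi_2$ but to every isogeny $\pi_m \colon T \to T$, $t \mapsto t^m$ with $m \geq 2$ coprime to $\mathrm{char}\,K$. To see that the exponent of $T(\A_S)/\overline{T(K)}$ is infinite it is enough, for each such $m$, to exhibit a single element of order $m$ in that quotient.

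My first step is to reduce to an integral statement. The subgroup $\prod_{v \notin S} T(\mathcal{O}_v)$ is open (hence closed) in $T(\A_S)$ and meets $T(K)$ exactly in $T(\mathcal{O}(S))$; since $\overline{N} \cap H = \overline{N \cap H}$ for any subset $N$ of a topological group and any open subgroup $H$, the inclusion descends to an injection
$$
\prod_{v \notin S} T(\mathcal{O}_v) \big/ \overline{T(\mathcal{O}(S))} \;\hookrightarrow\; T(\A_S) \big/ \overline{T(K)}.
$$
It therefore suffices to produce an element of order $m$ in the left-hand quotient. By the extension of Dirichlet's unit theorem to arbitrary tori, $T(\mathcal{O}(S))$ is finitely generated; fix generators $t_1, \ldots, t_r$.

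Next I would construct a suitable auxiliary field. Let $L_0/K$ be the splitting field of $T$, and use an isomorphism $T_{L_0} \cong \mathbb{G}_m^n$ (where $n = \dim T$) to write each $t_i$ as an $n$-tuple $(a_{i1}, \ldots, a_{in}) \in (L_0^\times)^n$. Choose a finite extension $L/K$ containing $L_0$, the group $\mu_m$, and all the $a_{ij}^{1/m}$. By Tchebotarev's density theorem, infinitely many places $v \notin S$ split completely in $L$; discard the finite set of $v$ dividing $m$ or lying in the (finite) set of bad-reduction places for $T$. For each remaining $v$: (i) $T$ is split with good reduction at $v$, so $T(\mathcal{O}_v) \cong (\mathcal{O}_v^\times)^n$; (ii) each $t_i$ is an $m$-th power in $T(L) \subset T(K_v)$, and since its coordinates are units at $v$ while $v \nmid m$ their $m$-th roots are also units, so in fact $t_i \in \pi_m(T(\mathcal{O}_v))$; (iii) $\mu_m \subset K_v$, equivalently $m \mid q_v - 1$ for the residue cardinality $q_v$.

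Finally, Hensel's lemma (valid because $v \nmid m$) identifies
$$
T(\mathcal{O}_v)/\pi_m(T(\mathcal{O}_v)) \;\cong\; \bigl(k_v^\times/(k_v^\times)^m\bigr)^n \;\cong\; (\Z/m\Z)^n,
$$
which contains an element of order $m$. Meanwhile (ii) places the generators $t_1, \ldots, t_r$, and hence all of $T(\mathcal{O}(S))$, inside the compact (and thus closed) subgroup $\pi_m(T(\mathcal{O}_v))$; consequently $\overline{T(\mathcal{O}(S))}$ also projects into $\pi_m(T(\mathcal{O}_v))$ at the $v$-coordinate. The continuous surjection $\prod_{w \notin S} T(\mathcal{O}_w) \twoheadrightarrow T(\mathcal{O}_v)/\pi_m(T(\mathcal{O}_v))$ therefore descends through the quotient by $\overline{T(\mathcal{O}(S))}$ and exhibits the desired element of order $m$. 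The main technical point will be juggling several conditions on $v$ at once---splitting $T$, containing $\mu_m$, being coprime to $m$, avoiding bad reduction, and making every generator an $m$-th power \emph{integrally} at $v$---and the finite generation of $T(\mathcal{O}(S))$ is precisely what keeps $L$ a finite extension so that Tchebotarev applies.
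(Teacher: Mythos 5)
Your proposal is correct and follows essentially the same route the paper indicates: the paper only sketches the general case by saying one applies the isogenies $\pi_m \colon t \mapsto t^m$ together with finite generation of $T(\mathcal{O}(S))$, adjunction of $m$-th roots of the generators, and Tchebotarev, and your write-up is a faithful elaboration of exactly that argument. The reduction to the open subgroup $\prod_{v \notin S} T(\mathcal{O}_v)$ and the local computation $T(\mathcal{O}_v)/\pi_m(T(\mathcal{O}_v)) \cong (\Z/m\Z)^n$ at completely split places are both sound.
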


This proposition yields a strong version of the fact that a
nontrivial torus over a global field always fails to have strong
approximation with respect to any finite set of places $S$.
Nevertheless, a torus may have strong approximation with respect to
some infinite (and co-infinite) sets $S$ - see Remark 3 after
Theorem \ref{SAT}.

\vskip2mm

\noindent {\bf 3. Simply connectedness as a necessary condition.}
The discussion of tori in the previous subsection suggests that the
existence of a nontrivial covering map for a given variety $X$ over
a global field $K$ may prevent it from having strong approximation
with respect to any finite set of places $S$. Indeed, as we will see
soon, simply connectedness of a connected  absolutely almost simple
group $G$ (i.e., the absence of nontrivial central isogenies $\pi
\colon \widetilde{G} \to G$ with connected $\widetilde{G}$ - see
\cite{Tits-cl} for a more detailed discussion) is one of the
essential conditions in the Strong Approximation Theorem for
algebraic groups (see Theorem \ref{SAT} below). But before we shift
our focus entirely to algebraic groups, we would like to mention the
following general result of Minchev \cite{Minch}  which does not
seem to be well-known to the general audience. (Note that we did not
formally define adeles for arbitrary varieties, so the reader may
want to assume that all varieties considered are actually affine, in
which case the definitions are completely  parallel to the above
definitions for algebraic groups.)
\begin{prop}\label{P:Minchev}
{\rm (\cite[Theorem 1]{Minch})} Let $X$ be an irreducible normal
variety over a number field $K$. If there exists a nontrivial
connected unramified covering $f \colon Y \to X$ defined over an
algebraic closure $\overline{K}$, then $X$ does not have strong
approximation with respect to any finite set $S$ of places of $K$.
\end{prop}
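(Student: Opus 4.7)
The plan is to adapt the argument from Proposition \ref{P:tori}: use the cover $f \colon Y \to X$ to trap $X(K)$ inside a proper closed subset of $X(K_v)$ for a suitable place $v \notin S$, thereby contradicting density of $X(K)$ in $X(\mathbb{A}_S)$. Concretely, I would aim for a $v \notin S$ satisfying (a) every $x \in X(K)$, viewed in $X(K_v)$, lies in $f(Y(K_v))$, and (b) $f(Y(K_v)) \subsetneq X(K_v)$; then $f(Y(K_v)) \times \prod_{v' \notin S,\, v' \neq v} X(K_{v'})$ is a proper closed subset of $X(\mathbb{A}_S)$ containing $X(K)$.

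First I would carry out some standard reductions. Replacing $Y$ by its Galois closure over $X$, one may assume that $f$ is a Galois \'etale cover with nontrivial group $G = \Ga(Y/X)$ and with $Y$ connected. There exists a finite Galois extension $L/K$ over which $Y$ and $f$ are defined and over which $Y_L$ is geometrically irreducible. Applying the Chevalley--Weil theorem to $f_L \colon Y_L \to X_L$ (which uses the normality of $X$ and the unramifiedness of $f$) produces a further finite extension $L'/L$ such that $f^{-1}(X(L)) \subset Y(L')$; in particular, $f^{-1}(X(K)) \subset Y(L')$.

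Next, by Chebotarev's density theorem there are infinitely many places $v \notin S$ of $K$ that split completely in $L'$, and I would choose such a $v$ lying outside a fixed finite set of bad primes (at which a chosen integral spreading-out of $f$ fails to be \'etale or at which geometric irreducibility is lost on reduction). Fixing a place $w'$ of $L'$ above $v$, we have $L'_{w'} = K_v$. For any $x \in X(K)$, each point of $f^{-1}(x)$ lies in $Y(L') \subset Y(L'_{w'}) = Y(K_v)$, so $x \in f(Y(K_v))$. This establishes (a); moreover, $f(Y(K_v))$ is closed in $X(K_v)$ because $f$ is finite, hence proper.

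The main obstacle is to establish (b). For this I would spread $f$ out to a finite \'etale cover $\mathcal{Y} \to \mathcal{X}$ over an open subscheme $U \subset \mathrm{Spec}\, \mathcal{O}_{L'}$; for the places $v$ above such that the chosen $w'$ lies over $U$, the reduction $\bar f \colon \bar{\mathcal{Y}} \to \bar{\mathcal{X}}$ is a nontrivial Galois \'etale cover with group $G$ of geometrically irreducible $k_v$-varieties. Since a $G$-torsor over $k_v$ has a $k_v$-rational point if and only if its Frobenius is trivial, a point $\bar x \in \bar{\mathcal{X}}(k_v)$ lies in $\bar f(\bar{\mathcal{Y}}(k_v))$ precisely when $\mathrm{Frob}_v$ acts trivially on the fiber $\bar f^{-1}(\bar x)$. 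Combining the Lang--Weil estimates with a geometric Chebotarev count gives
$$
|\bar f(\bar{\mathcal{Y}}(k_v))| = \frac{1}{|G|}\, |k_v|^{\dim X} + O\bigl(|k_v|^{\dim X - 1/2}\bigr),
$$
which is strictly smaller than $|\bar{\mathcal{X}}(k_v)|$ for $|k_v|$ large (since $|G| > 1$). Choosing a $\bar x$ outside this image and lifting by Hensel's lemma (applied on the smooth locus of $\mathcal{X}$, which is nonempty by normality) produces a $K_v$-point of $\mathcal{X}$ that does not lie in $f(Y(K_v))$, yielding (b). Together with (a) this contradicts strong approximation, completing the proof.
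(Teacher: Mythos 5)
Your overall architecture matches the paper's: pass to a Galois cover, find places $v$ where all relevant points of $X$ land in $f(Y(K_v))$, and show by counting points of the reduced cover over the residue field (plus Hensel lifting) that this image is proper. Steps (b) and the final contradiction are fine. But step (a) has a genuine gap: for an affine variety $X$, the Chevalley--Weil theorem does \emph{not} give a single finite extension $L'$ with $f^{-1}(X(K)) \subset Y(L')$. That conclusion holds for proper varieties, or for \emph{integral} points of affine ones, but fails badly for rational points of affine varieties: take $X = Y = \mathbb{G}_m$ and $f(t) = t^2$; then $f^{-1}(X(\Q))$ contains $\sqrt{p}$ for every prime $p$, so the fields generated by the fibers are not contained in any fixed number field. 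Without a uniform field $L'$, you cannot choose a single $v$ (split in $L'$) at which \emph{every} $x \in X(K)$ acquires a $K_v$-point in its fiber, so (a) collapses.

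The repair is exactly where the paper's proof differs from yours. First reduce to showing that $X(\mathcal{O}(S))$ is not dense in $X(\mathcal{O}_v)$ for some $v \notin S$; this suffices because $\prod_{v \notin S} X(\mathcal{O}_v)$ is open in $X(\mathbb{A}_S)$ and meets $X(K)$ in $X(\mathcal{O}(S))$. Then apply the \emph{local} version of Chevalley--Weil (Lang, \emph{Fundamentals of Diophantine Geometry}, Ch.~2, Lemma 8.3) to conclude that for $x \in X(\mathcal{O}(S))$ the fields $L(f^{-1}(x))$ are unramified outside a fixed finite set $S_1$, and of degree $\leqslant n!$ over $L$; Hermite's theorem then says there are only finitely many such fields, which is what produces the uniform $L'$ (the paper calls it $L_1$) with $f^{-1}(X(\mathcal{O}(S))) \subset Y(L_1)$. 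From there your argument goes through verbatim with $X(K_v)$, $Y(K_v)$ replaced by $X(\mathcal{O}_v)$, $Y(\mathcal{O}_v)$. (Your Lang--Weil-plus-Chebotarev estimate for $\vert \bar{f}(\bar{\mathcal{Y}}(k_v))\vert$ is an acceptable variant of the paper's exact count $\vert \underline{Y}^{(w)}(\ell_w)\vert / n$ coming from the torsor description of the fibers.)
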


Since \cite{Minch} was published in a journal with rather limited
circulation (and only in Russian), we will give here a sketch of the
argument assuming $X$ and $Y$ to be affine and smooth and $S$ to
contain all archimedean places. We may assume that $f$ is a Galois
cover of degree $n > 1$, and pick a finite extension $L/K$ such that
$Y$ and $f$ are $L$-defined. For $x \in X(L)$, we let $L(f^{-1}(x))$
denote the extension of $L$ generated by the coordinates of all
preimages of $x$ in $Y(\overline{K})$; note that $[L(f^{-1}(x)) : L]
\leqslant n!$. Using the local version of the Chevalley-Weil theorem
(cf. \cite[Ch. 2, Lemma 8.3]{Lang-dioph}), for which we need $f$ to
be unramified, one shows that there exists a finite set of places
$S_1$ of $K$ containing $S$ such that any $v \notin S_1$ is
unramified in $L(f^{-1}(x))$ for all $x \in X(\mathcal{O}(S))$.
Invoking Hermite's theorem (cf. \cite[p. 122]{Lang-NT}), we now
conclude that there are only finitely many possibilities for
$L(f^{-1}(x))$ as $x$ ranges in $X(\mathcal{O}(S))$, and therefore
there exists a finite extension $L_1/L$ such that
$f^{-1}(X(\mathcal{O}(S))) \subset Y(L_1)$. Enlarging $L$, we can
actually assume that $L = L_1$ and $L/K$ is a Galois extension.
Also, expanding $S$ if necessary, we can make sure that if $v \notin
S$ splits completely in $L$ (i.e., $L \subset K_v$) then
\begin{equation}\label{E:Minchev0}
X(\mathcal{O}(S)) \subset f_{K_v}(Y(\mathcal{O}_v)).
\end{equation}
On the other hand, for almost all nonarchimedean places $w$ of $L$,
the reductions $\underline{X}^{(w)}$ and $\underline{Y}^{(w)}$
modulo $w$ are smooth irreducible varieties over the residue field
$\ell_w$, and the reduction $\underline{f}^{(w)} \colon
\underline{Y}^{(w)} \to \underline{X}^{(w)}$ is an $n$-sheeted
Galois cover. It follows that
\begin{equation}\label{E:Minchev1}
\vert \underline{f}_{\ell_w}^{(w)}(\underline{Y}^{(w)}(\ell_w))
\vert = \frac{\vert \underline{Y}^{(w)}(\ell_w) \vert}{n}.
\end{equation}
Since $\underline{X}^{(w)}$ and $\underline{Y}^{(w)}$ are
irreducible, by the Lang-Weil theorem \cite{LaWe}, the cardinalities
$\vert \underline{X}^{(w)}(\ell_w) \vert$ and $\vert
\underline{Y}^{(w)}(\ell_w) \vert$ are both ``approximately equal"
to $q_w^d$, where $q_w = \vert \ell_w \vert$ and $d$ is the common
dimension of $\underline{X}^{(w)}$ and $\underline{Y}^{(w)}$.
Comparing this with (\ref{E:Minchev1}), we see that for almost all
$w$, the cardinality $\vert
\underline{f}_{\ell_w}^{(w)}(\underline{Y}^{(w)}(\ell_w)) \vert$ is
only a fraction of $\vert \underline{X}^{(w)}(\ell_w) \vert$; in
particular,
$\underline{f}_{\ell_w}^{(w)}(\underline{Y}^{(w)}(\ell_w)) \neq
\underline{X}^{(w)}(\ell_w)$. Since by Hensel's lemma, the reduction
map $X(\mathcal{O}_w) \to \underline{X}^{(w)}(\ell_w)$ is
surjective, we obtain that
$$
f_{L_w}(Y(\mathcal{O}_w)) \neq X(\mathcal{O}_w).
$$
(in fact, our argument shows that $f_{L_w}(Y(\mathcal{O}_w))$ is
``much smaller" than -- in some sense, a ``fraction" of --
$X(\mathcal{O}_w)$).

This discussion, in conjunction with (\ref{E:Minchev0}) implies that
for almost all $v$ that  split completely in $L$, the set
$X(\mathcal{O}(S))$ is not dense in $X(\mathcal{O}_v)$. Since by
Tchebotarev's Density Theorem (\cite[p.~169]{Lang-NT}), there are
infinitely many $v$'s that split completely in $L$, we obtain that
$X$ does not have strong approximation with respect to $S$ (and in
fact that the closure of $X(\mathcal{O}(S))$ in $\prod_{v \notin S}
X(\mathcal{O}_v)$ is very ``thin"). \hfill $\Box$

\vskip2mm

(We note that Minchev \cite{Minch} points out another necessary
condition for strong approximation in a $K$-variety $X$ (which is
much easier to prove): $X$ needs to be (absolutely) irreducible.)

\vskip2mm

While the proof of Proposition \ref{P:Minchev} for general varieties
requires some facts from arithmetic algebraic geometry, there is a
much simpler argument in the case of algebraic groups (cf.
\cite[\S~7.4]{PlR}). Since most readers are likely to be
particularly interested in this case, we will explain the idea using
the following example. Consider the canonical isogeny
$$
\widetilde{G} = \mathrm{SL}_2 \stackrel{\pi}{\longrightarrow}
\mathrm{PSL}_2 = G
$$
of algebraic groups over a number field $K$. By the Skolem-Noether
theorem, one can think of $G$ as the automorphism group
$\mathrm{Aut}(M_2)$ of the degree two matrix algebra. Then for any
field extension $F/K$, again by the Skolem-Noether theorem, we have
$$
G(F) = \mathrm{Aut}_F(M_2(F)) = PGL_2(F).
$$
Then there is an exact sequence
\begin{equation}\label{E:es2}
\widetilde{G}(F) \stackrel{\pi_F}{\longrightarrow} G(F)
\stackrel{\theta_F}{\longrightarrow} F^{\times}/{F^{\times}}^2
\rightarrow 1,
\end{equation}
where $\theta_F$ is induced by the determinant, viz. $g F^{\times}
\mapsto (\det g) {F^{\times}}^2$. (Alternatively, one can think of
$G$ as the special orthogonal group $\mathrm{SO}_3(q)$ of the
Killing form $q$ on the Lie algebra $\mathfrak{sl}_2$ -- recall that
$q = 2x^2 + yz$ in the Chevalley basis; then $\widetilde{G}$ can be
identified with $\mathrm{Spin}_3(q)$, and $\theta_F$ becomes simply
the spinor norm map on $\mathrm{SO}_3(q)(F)$.)

The point is that given {\it  any} finitely generated subgroup
$\Gamma \subset G(K)$, its image $\Delta := \theta_K(\Gamma)$ is a
{\it finite group}. Now, if $K$ is a number field, it follows from
Tchebotarev's Density Theorem that there are infinitey many
nonarchimedean places $v$ of $K$ such that the image of $\Delta$
under the natural map $K^{\times}/{K^{\times}}^2 \rightarrow
K_v^{\times}/{K_v^{\times}}^2$ is trivial. From the exactness of
(\ref{E:es2}) for $F = K_v$, we conclude that for these $v$ we have
$$
\Gamma \subset \pi_{K_v}(\widetilde{G}(K_v)) \neq G(K_v).
$$
Applying this to $\Gamma = G(\mathcal{O}(S))$ (which is finitely
generated), we obtain that for almost all such $v$,
$$
G(\mathcal{O}(S)) \subset \pi_{K_v}(\widetilde{G}(\mathcal{O}_v))
\neq G(\mathcal{O}_v).
$$
The latter implies that the closure of $G(\mathcal{O}(S))$ in
$\prod_{v \notin S} G(\mathcal{O}_v)$ is of infinite index, for any
finite set $S$ of places of $K$, and hence $G$ fails to have strong
approximation.

\vskip2mm

This type of argument easily generalizes to prove that if  a
connected algebraic group $G$ over a number field $K$ is not simply
connected, then $G$ fails to have strong approximation for any
finite set $S$ of places of $K$ (see \cite[\S~7.4]{PlR} for the
details).

\vskip2mm

\noindent {\sc Example.} Let $G = \mathrm{GL}_2$. Set $\widetilde{G}
= G \times \mathbb{G}_m$. Then the product map $\widetilde{G} \to G$
is an isogeny of degree 2. Moreover, composing it with the map
$\widetilde{G} \to \widetilde{G}$, $(g , t) \mapsto (g , t^{\ell})$
for $\ell \geqslant 1$, we obtain an isogeny $\widetilde{G} \to G$
of an arbitrary even degree $2\ell$. On the other hand, the map $G
\to G$, $g \mapsto (\det g)^{\ell} g$ for $\ell \geqslant 1$, is an
isogeny of an arbitrary odd degree $(2\ell + 1)$. Thus, $G$ has
finite-sheeted connected coverings of any degree, in particular, it
is not simply connected. In view of the results discussed above,
this explains why $G$ does not have strong approximation with
respect to any finite $S$.

\vskip2mm

\noindent {\bf 4. Strong approximation theorem.} So far, we have
identified two necessary conditions for strong approximation in a
connected algebraic group $G$ over a number field $K$ with respect
to a finite set $S$ of places of $K$ that contains all archimedean
places: the $S$-arithmetic subgroups (i.e., subgroups commensurable
with $G(\mathcal{O}(S))$) must be Zariski-dense, and $G$ must be
simply connected. It turns out that for semi-simple groups, these
conditions are also sufficient. Since the general case easily
reduces to absolutely almost simple groups (cf. \cite[\S~7.4]{PlR}),
we will give a precise statement of the Strong Approximation Theorem
only for this case (however, we will include global fields of
positive characteristic).
\begin{thm}\label{SAT}
{\rm (Kneser \cite{Kneser}, Platonov \cite{Platonov} in
characteristic zero; Margulis \cite{Marg1}, \cite{Marg2}, Prasad
\cite{Prasad} in positive characteristic)} Let $G$ be a connected
absolutely almost simple algebraic group over a global field $K$,
and let $S$ be a finite set of places of $K$. Then $G$ has strong
approximation with respect to $S$ (i.e., $G(K)$ is dense in
$G(\mathbb{A}_S)$) if and only if

\vskip2mm

{\rm (1)} $G$ is simply connected;

\vskip1mm

{\rm (2)} $G_S := \prod_{v \in S} G(K_v)$ is noncompact.
\end{thm}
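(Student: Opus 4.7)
Necessity splits into two independent arguments. For (1), if $G$ is not simply connected, pick a nontrivial central isogeny $\pi \colon \tilde G \to G$ with kernel $Z$, and consider the associated coboundary $\delta \colon G(F) \to H^1(F, Z)$ that detects liftability. Reduction theory gives that $G(\mathcal{O}(S))$ is finitely generated, so $\delta(G(\mathcal{O}(S))) \subset H^1(K, Z)$ is a finite set. Tchebotarev's theorem produces infinitely many $v \notin S$ at which this finite set is killed under restriction to $H^1(K_v, Z)$, while $H^1(K_v, Z)$ itself is nontrivial; for such $v$ the inclusion $G(\mathcal{O}(S)) \subset \pi_{K_v}(\tilde G(K_v)) \subsetneqq G(K_v)$ is a proper containment and rules out density. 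For (2), if $G_S$ is compact, then the projection $G(\mathbb{A}) \to G(\mathbb{A}_S)$ has compact fibers and is therefore proper; the discrete diagonal $G(K)$ projects onto a closed discrete subgroup of $G(\mathbb{A}_S)$, and since $G(\mathbb{A}_S)$ is uncountable for nontrivial $G$, a countable discrete subgroup cannot be dense.

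For sufficiency, assume $G$ is simply connected and $G_S$ is noncompact. For absolutely almost simple simply connected $G$ over a local field, noncompactness of the group of rational points is equivalent to $K_v$-isotropy, so there is a place $v_0 \in S$ at which $G$ is $K_{v_0}$-isotropic. I would first dispose of the case where $G$ is $K$-isotropic, which is the clean prototype. There $G$ carries a proper $K$-parabolic $P = MU$ with opposite parabolic $P^- = MU^-$; both $U$ and $U^-$ are $K$-split unipotent, hence $K$-isomorphic as varieties to affine spaces. Strong approximation for $\mathbb{G}_a$ -- the density of $K$ in $\mathbb{A}_S$ for any nonempty $S$ -- therefore gives strong approximation for $U$ and for $U^-$ individually. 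The Kneser-Tits theorem for simply connected isotropic groups over local fields then implies that $G(K_v) = G(K_v)^+$ for every $v$ (all $v$ are isotropic places in this case), so the subgroup of $G(K)$ generated by $U(K) \cup U^-(K)$ is already dense in $G(\mathbb{A}_S)$.

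The serious case and main obstacle is when $G$ is $K$-anisotropic but $K_{v_0}$-isotropic for some $v_0 \in S$: here there are no nontrivial $K$-rational unipotents, so the global parabolic tactic is unavailable and one must transfer the local isotropy at $v_0$ into globally usable information. Kneser and Platonov reduce to a case-by-case analysis via the Tits classification, invoking in each type the Hasse principle for $H^1(K, G)$ for simply connected $G$ (Kneser for classical types, Harder-Chernousov for exceptional ones) together with explicit generation theorems for classical groups over local rings. In positive characteristic, Margulis and Prasad replace the cohomological input by a geometric and ergodic one, working with the action of $G(K)$ on a product of Bruhat-Tits buildings for $v \notin S$ and exploiting the noncompactness of $G_S$ to force density of orbits. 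In either approach, the decisive new step is to produce, in the closure of $G(K)$ in $G(\mathbb{A}_S)$, enough local elements to generate a dense subgroup, using simple connectedness of $G$ globally via a Hasse-principle-type input or its ergodic surrogate.
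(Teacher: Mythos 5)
Your necessity arguments are essentially the ones the paper uses or implies: for (1), the coboundary into $H^1(\cdot , Z)$, the finite generation of $G(\mathcal{O}(S))$, and Tchebotarev are exactly the mechanism of \S 2.3 (carried out there for $\mathrm{SL}_2 \to \mathrm{PSL}_2$ via the determinant/spinor norm map, with the remark that it generalizes); your argument for (2) is not written out in the paper but is correct. Your sufficiency argument in the $K$-isotropic case is also a legitimate classical route (Eichler--Kneser, rather than what the paper presents): the closure of $\langle U(K), U^-(K)\rangle$ in $G(\mathbb{A}_S)$ contains $U(\mathbb{A}_S)$ and $U^-(\mathbb{A}_S)$, hence each $G(K_v)^+ = G(K_v)$ embedded at a single place $v \notin S$, hence the dense subgroup generated by all of these. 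That part is fine and is genuinely more elementary than the paper's argument.

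The gap is the $K$-anisotropic case, which is exactly where the theorem is hard; there your text substitutes a historical gloss for a proof, and the gloss misrepresents the method the paper actually presents. Platonov's argument, sketched in \S 2.6, is \emph{not} a case-by-case analysis through the Tits classification and invokes no Hasse principle for $H^1(K,G)$; its whole point is a uniform mechanism that needs no $K$-rational unipotents. Concretely: reduce strong approximation to the density of $\Lambda := G(\mathcal{O}(S \cup S_1))$ in $G_{S_1}$ for finite $S_1$ disjoint from $S$; by reduction theory $\Lambda$ is a lattice in $G_{S \cup S_1}$, and noncompactness of $G_S$ forces $\Lambda$ to be nondiscrete in $G_{S_1}$, so its closure $\Delta$ has finite invariant covolume there; Borel density, Cartan's theorem on $p$-adic Lie groups, and the irreducibility of the adjoint representation (the argument of Lemma \ref{L:SA}) show that $\Delta$ is open, hence of finite index; and simple connectedness enters only through the Kneser--Tits property $G(K_v) = G(K_v)^+$ over the local fields $K_v$ at isotropic places, which rules out proper finite-index subgroups and gives $\Delta = G_{S_1}$ (the finitely many $K_v$-anisotropic places require an extra step, for which the paper refers to \cite{PlR}). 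In positive characteristic the same scheme runs with Cartan's theorem replaced by the Margulis--Prasad theorem that a nondiscrete closed subgroup of finite covolume in $G(F)$ contains $G(F)^+$; the Bruhat--Tits/ergodic machinery you allude to lives inside the proof of that statement, not in a separate classification argument. As written, your proposal asserts rather than proves the decisive step, so you should either supply an argument of the above type for the anisotropic case or fold both cases into the uniform measure-theoretic argument from the start.
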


(We note that for an absolutely almost simple group $G$, condition
(2) is equivalent to $G(\mathcal{O}(S))$ being infinite, and hence
Zariski-dense in $G$, cf. \cite[Theorem 4.10]{PlR}. It should also
be mentioned that in the statement of the theorem we included only
the names of the main contributors; the interested reader will find
more historical remarks at the end of \S~7.4 in \cite{PlR}, and also
at the end of the current section.)

\vskip2mm

{\bf Remarks.} 1. The condition that $G$ is simply connected is used
in the proof of sufficiency in Theorem \ref{SAT} in a very peculiar
way that is totally unrelated to the above considerations showing
that simply connectedness is necessary for strong approximation.
More precisely, what we need is the fact that for all  $v \notin S$
such that $G$ is $K_v$-isotropic (i.e., has positive rank over
$K_v$), the group $G(K_v)$ does not have proper (abstract) subgroups
of finite index (see \S~2.6). It turns out that in the situation at
hand, for $G$ simply connected, the group $G(K_v)$ does not, in
fact, have {\it any} proper noncentral normal subgroups. To put this
result in perspective, we recall the result of Tits \cite{Ti}
asserting that given an absolutely almost simple isotropic algebraic
group $G$ over a field $P$ with $\geqslant 4$ elements, the subgroup
$G(P)^+$ of $G(P)$ generated by the $P$-rational points of
$P$-defined parabolics, does not have any proper noncentral normal
subgroups. In the same paper, Tits proposed a conjecture, which
later became known as the {\it Kneser-Tits conjecture}, that
actually $G(P)^+ = G(P)$ {\it if} $G$ is simply connected. While
over general fields this conjecture turned out to be false (cf.
Platonov \cite{Plat-KT}), it was proved by Platonov \cite{Platonov}
to hold over nonarchimedean local fields of characteristic zero
(i.e., finite extensions of the $p$-adic field $\Q_p$); over $\R$
this fact was established much earlier by E.~Cartan (cf.
\cite[Proposition~7.6]{PlR}). This connection between strong
approximation and the Kneser-Tits conjecture was the centerpiece of
Platonov's paper \cite{Platonov}. We will see another manifestation
of this connection in the analysis of strong approximation for
arbitrary Zariski-dense subgroups (cf. \S~\ref{S:ZD}), although in a
different setting (viz., over finite fields). On the other hand,
over a local or a finite field $P$, we have $G(P)^+ \neq G(P)$ if
$G$ is not simply connected, and hence in this case $G(P)$ does have
proper noncentral normal subgroups (of finite index). This is where
the proof of Theorem \ref{SAT} and the corresponding argument in \S
\ref{S:ZD} breaks down if one drops the assumption that $G$ is
simply connected. Finally, we remark that the Kneser-Tits conjecture
has generated a lot of research not associated with strong
approximation - see Gille \cite{Gille} for a recent survey.

\vskip2mm

2. The effect of non-simply connectedness on strong approximation
with respect to a finite set $S$ is different for tori and
semi-simple groups: for a $K$-torus $T$, the quotient
$T(\mathbb{A}_S)/\overline{T(K)}$ by the closure of the group of
rational points has infinite exponent (Proposition \ref{P:tori}),
while, as follows from Theorem \ref{SAT}, for a connected absolutely
almost simple non-simply connected $K$-group $G$ with a universal
$K$-defined cover $\pi \colon \widetilde{G} \to G$ such that the
group $\widetilde{G}_S$ is not compact, the closure $\overline{G(K)}
\subset G(\mathbb{A}_S)$ is a normal subgroup with the infinite
quotient $G(\mathbb{A}_S)/\overline{G(K)}$ having finite exponent.
(This distinction, of course, reflects the fact that the (algebraic)
fundamental group of $G$ is finite, while that of $T$ is infinite.)

\vskip2mm

3. A connected $K$-group $G$ may  have strong approximation with
respect to certain {\it infinite} sets $S$ of places of $K$ {\it
without} being simply connected. For example, in \cite{PrRap}, we
examined in this context strong approximation in tori (which can
never be valid for finite $S$ - see Proposition \ref{P:tori}). To
avoid technical definitions, we will just indicate what our results
give in the case of the 1-dimensional split torus $T = \mathbb{G}_m$
over $K = \Q$: {\it If $S$ is an infinite set of places of $K$ that
contains the $p$-adic places for almost all primes $p$ in a certain
arithmetic progression, then the closure $\overline{T(\Q)}$ of
$T(\Q)$ in the group of $S$-adeles $T(\mathbb{A}_S)$ is of finite
index}. The result for general tori is basically the same but
contains one important exclusion that has to do with how the
arithmetic progression interacts with the splitting field of the
torus. This fact is instrumental for the analysis of the congruence
subgroup problem: it implies, in particular, that if $G$ is an
absolutely almost simple simply connected  algebraic group over a
number field $K$, which is an inner form, and $S$ is a set of places
of $K$ that contains all archimedean places and also almost all
places in a certain generalized arithmetic progression, then the
corresponding congruence kernel $C^S(G)$ is trivial, i.e. every
subgroup of finite index in $G(\mathcal{O}(S))$ contains a suitable
congruence subgroup (provided that $G(K)$ has a standard description
of normal subgroups), see \cite{PrRap-CSP}.

\vskip2mm

4. For general affine varieties, the analogs of conditions (1) and
(2) in Theorem \ref{SAT} may not be sufficient for strong
approximation, even in homogeneous spaces.

\vskip1.5mm

\noindent {\sc Example.} Let $f(x , y , z) = ax^2 + by^2 + cz^2$ be
the nondegenerate ternary quadratic form over a number field $K$,
and let $X \subset \mathbb{A}^3$ be a quadric given by $f(x , y , z)
= a$. Set $g(x , y) = by^2 + cz^2$. Let $S$ be a finite set of
places of $K$ such that $X_S = \prod_{v \in S} X(K_v)$ is noncompact
(equivalently, for some $v \in S$ the form $f$ is $K_v$-isotropic).
Then (see \S~2.5 below)
$X$ has strong approximation with respect to $S$ if and only if one
of the following two conditions holds:

\vskip1mm

\noindent (a) $g$ is $K$-isotropic;

\vskip.6mm

\noindent (b) \parbox[t]{11.9cm}{$g$ is $K$-anisotropic and there
exists $v \in S$ such that $g$ remains anisotropic over $K_v$ and
either $v$ is nonarchimedean or $f$ is $K_v$-isotropic.}

\vskip2mm

\noindent It follows that a rational quadric $X$ defined by $x_1^2 +
x_2^2 - 2x_3^2 = 1$ (which is simply connected) does not have strong
approximation with respect to $S = \{ \infty \}$.

\vskip3mm

\noindent {\bf 5. Strong approximation in homogeneous spaces.} The
fact quoted in the above example is a consequence of the analysis of
strong approximation in (affine) homogeneous spaces of algebraic
groups. Since these results (cf. \cite{Borovoi}, \cite{Rap}; a
detailed exposition of \cite{Rap} was given in \cite{Rap-CSP}) are
not as widely known as Theorem \ref{SAT}, we briefly mention some of
them here for the sake of completeness. The fact that only connected
simply connected varieties have a chance to possess strong
approximation, by and large, forces us to focus our attention of
homogeneous spaces of the form $X = G/H$ where $G$ is a semi-simple
simply connected algebraic $K$-group, and $H$ is a $K$-defined
connected reductive subgroup (any such variety is affine and simply
connected). Furthermore, given a set $S$ of places of $K$, it is not
difficult to show that for such $X$, the space $X_S$ is noncompact
if and only if $G_S$ is noncompact. Assuming now that $G$ is
actually absolutely almost simple, we conclude from Theorem
\ref{SAT} that $G$ has strong approximation with respect to $S$ (for
a general semi-simple group $G$ we need to consider its simple
components). Then using Galois cohomology one investigates when
strong approximation in $G$ implies strong approximation in $X =
G/H$. Here is one easy result in this direction.
\begin{prop}\label{P:HS1}
{\rm (\cite{Rap})} Let $X = G/H$ be the quotient of a connected
absolutely almost simple simply connected algebraic group $G$
defined over a number field $K$ by a connected semi-simple simply
connected $K$-subgroup $H$. Then $X$ has strong approximation with
respect to a finite set $S$ of places of $K$ if and only the space
$X_S = \prod_{v \in S} X(K_v)$ is noncompact.
\end{prop}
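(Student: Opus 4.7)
The first step is to invoke the remark preceding the statement: $X_S$ is noncompact if and only if $G_S$ is noncompact, and since $G$ is connected, absolutely almost simple, and simply connected, Theorem \ref{SAT} identifies the latter with the assertion that $G$ itself has strong approximation with respect to $S$. So the proposition reduces to showing that, under our hypotheses on $G$ and $H$, strong approximation for $X$ is equivalent to strong approximation for $G$. For clarity of exposition I will treat the case where $S$ contains all archimedean places; the general case is handled by the same argument together with the finiteness of $H^1(K_v, H)$ at real places.

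The main tool is the Galois-cohomological sequence attached to the principal $H$-bundle $\pi \colon G \to G/H = X$: for any field extension $F/K$,
$$
G(F) \ \xrightarrow{\ \pi_F\ } \ X(F) \ \xrightarrow{\ \delta_F\ } \ H^1(F, H).
$$
Since $H$ is semi-simple and simply connected, Kneser's theorem gives $H^1(K_v, H) = 1$ at every nonarchimedean $v$, so $\pi_{K_v}$ is surjective on $K_v$-points for all $v \notin S$. Moreover $\pi$ is smooth as a morphism of $K$-varieties, so after spreading out to a suitable integral model one also obtains surjectivity of $\pi_{\mathcal{O}_v} \colon G(\mathcal{O}_v) \to X(\mathcal{O}_v)$ for almost all $v$, via Hensel's lemma applied to a smooth $H$-torsor over a Henselian local ring with finite residue field.

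For sufficiency, let $U \subset X(\mathbb{A}_S)$ be a basic nonempty open set of the form $\prod_{v \in T \setminus S} U_v \times \prod_{v \notin T} X(\mathcal{O}_v)$ with $T \supset S$ finite and each $U_v \subset X(K_v)$ open. For each $v \in T \setminus S$ pick $g_v \in G(K_v)$ with $\pi_{K_v}(g_v) \in U_v$ and a neighborhood $V_v$ of $g_v$ with $\pi_{K_v}(V_v) \subset U_v$; for $v \notin T$ set $V_v = G(\mathcal{O}_v)$. Then $V := \prod_{v \in T \setminus S} V_v \times \prod_{v \notin T} G(\mathcal{O}_v)$ is a nonempty open subset of $G(\mathbb{A}_S)$ with $\pi(V) \subset U$, and strong approximation for $G$ yields $g \in G(K) \cap V$; setting $x := \pi(g) \in X(K)$ produces $x \in U$, proving density of $X(K)$ in $X(\mathbb{A}_S)$. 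For necessity, if $X_S$ is compact then so is $G_S$, and Theorem \ref{SAT} asserts that $\overline{G(K)}$ is of infinite index in $G(\mathbb{A}_S)$; since $\pi$ is surjective at almost all places, one transfers this failure through the adelic map induced by $\pi$ to conclude that $\overline{X(K)}$ is of infinite index in $X(\mathbb{A}_S)$ as well.

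The main obstacle is the integral lifting step: surjectivity of $\pi_{K_v}$ alone is insufficient for the adelic argument, and one needs surjectivity of the reduction map on $\mathcal{O}_v$-points for almost all $v$, which requires simultaneously smoothness of the $\mathcal{O}_v$-model and Kneser's vanishing $H^1(K_v, H) = 1$. This is precisely where the hypothesis that $H$ is semi-simple simply connected is used in an essential way; were $H$ a torus or a non-simply-connected semi-simple group, Kneser's theorem would fail and $\pi_{K_v}(G(K_v))$ could be a proper open subset of $X(K_v)$, destroying the lifting scheme.
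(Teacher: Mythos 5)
The paper does not reproduce a proof of this proposition (it is only quoted from \cite{Rap}), but the Galois-cohomological route you take --- the exact sequence $G(F)\to X(F)\to H^1(F,H)$, Kneser's vanishing $H^1(K_v,H)=1$ at nonarchimedean places, and the lifting of a basic adelic open set of $X$ to one of $G$ so that strong approximation for $G$ (Theorem \ref{SAT}) does the work --- is exactly the intended argument, and your sufficiency direction is essentially correct. Two calibration points there. First, the surjectivity of $G(\mathcal{O}_v)\to X(\mathcal{O}_v)$ for almost all $v$, which you single out as the main obstacle, is not actually needed for density: at the tail places the target is all of $X(\mathcal{O}_v)$, so the inclusion $\pi(G(\mathcal{O}_v))\subset X(\mathcal{O}_v)$ (pure spreading out, no cohomology) suffices, and the hypothesis that $H$ is semi-simple simply connected is really consumed at the finitely many places $v\in T\setminus S$, where one must lift an arbitrary point of $U_v\subset X(K_v)$ to $G(K_v)$. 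Second, for $S$ not containing all archimedean places the missing ingredient is not mere finiteness of $H^1(\R,H)$ but the Hasse principle for $H^1$ of simply connected groups, which is what allows one to realize prescribed classes at the real places outside $S$ by elements of $\ker\left(H^1(K,H)\to H^1(K,G)\right)=\delta_K(X(K))$.

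The genuine gap is the necessity direction. You propose to ``transfer'' the failure of density from $G$ to $X$ through $\pi$, but this cannot work as stated: $X(\mathbb{A}_S)$ is not a group, so ``infinite index'' of $\overline{X(K)}$ is meaningless; non-density does not push forward through a surjection (a non-dense set can perfectly well have dense image); and $X(K)$ is in general strictly larger than $\pi(G(K))$, since $\delta_K$ need not be trivial, so controlling the closure of $\pi(G(K))$ says nothing about the closure of $X(K)$. The correct argument is the one the paper already hints at in \S 2.1: $X(K)$ is a closed discrete subset of the full adele space $X(\mathbb{A})=X_S\times X(\mathbb{A}_S)$, so if $X_S$ is compact then for any compact open $\Omega\subset X(\mathbb{A}_S)$ the set of rational points landing in $\Omega$ is discrete in the compact set $X_S\times\Omega$, hence finite, and therefore cannot be dense in the infinite set $\Omega$. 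This fix is short, but it is a different idea from the one you wrote, and the step as you wrote it would fail.
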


Now, let $q = q(x_1, \ldots , x_n)$ be a nondegenerate quadratic
form in $n \geqslant 3$ variables. Consider the quadric $X \subset
\mathbb{A}^n$ given by the equation $q(x_1, \ldots , x_n) = a$ for
some $a \in K^{\times}$. Assuming that $X(K) \neq \emptyset$, pick
$x \in X(K)$. Then $X = G/H$ where $G = \mathrm{Spin}_n(q)$ and $H =
G(x)$ (the stabilizer of $x$); note that $H \simeq
\mathrm{Spin}_{n-1}(q')$ where $q'$ is the restriction of $q$ to the
orthogonal complement of $x$. So, it follows from Proposition
\ref{P:HS1} that for $n \geqslant 5$, the quadric $X$ has strong
approximation with respect to $X$ if and only if there exists $v \in
S$ such that $q$ is $K_v$-isotropic. The same result remains valid
for $n = 4$ even though in this case $G$ is not absolutely almost
simple. (Incidentally, this result applies to the equation that
defines $\mathrm{SL}_2$ (cf. \S~1.2), yielding thereby another proof
of strong approximations for this group, cf. Lemma \ref{L:2}.)

The case $n = 3$ is different as here $H$ is a torus. This case can
also be treated in a rather explicit form using the results of
Nakayama-Tate on the Galois cohomology of tori. More precisely, let
$T$ be a $K$-torus, and let $L$ be the splitting field of $T$. As
usual, given a module $M$ over the Galois group $\Ga(L/K)$, we let
$H^i(L/K , M)$ denote the Galois cohomology group $H^i(\Ga(L/K) ,
M)$. Given a finite set $S$ of places of $K$, we let $\overline{S}$
denote the set of all extensions of places in $S$ to $L$, and let
$\mathbb{A}_L$ and $\mathbb{A}_{L , \overline{S}}$ denote the rings
of adeles and $\overline{S}$-adeles of $L$. Finally, let $c_L(T) =
T(\mathbb{A}_L)/T(L)$ be the adele class group of $T$ over $L$, and
let
$$
\delta \colon H^1(L/K , T(\mathbb{A}_L)) \longrightarrow H^1(L/K ,
c_L(T))
$$
be the corresponding map on cohomology. Then, viewing
$T_{\overline{S}}$ and $T(\mathbb{A}_{L , \overline{S}})$ as
subgroups of $T(\mathbb{A}_L)$, we have the following statement.
\begin{prop}\label{P:HS2}
{\rm (\cite{Rap})} Let $X = G/T$, where $G$ is an absolutely almost
simple simply connected $K$-group and $T \subset G$ is a $K$-torus.
Then $X$ has strong approximation with respect to a finite set $S$
of places of $K$ if and only if $X_S$ is noncompact and
$$
\delta\left( H^1(L/K , T(\mathbb{A}_{L , \overline{S}}))\right)
\subset \delta\left(\mathrm{Ker}\left(H^1(L/K , T_{\overline{S}})
\to H^1(L/K , G_{\overline{S}})\right) \right),
$$
where $L$ is the splitting field of $T$ and $\overline{S}$ consists
of all extensions of places in $S$ to $L$.
\end{prop}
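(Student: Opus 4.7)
The plan is to exploit the short exact sequence
$$
1 \longrightarrow T \longrightarrow G \stackrel{\pi}{\longrightarrow} X \longrightarrow 1
$$
of algebraic $K$-groups and then to translate the resulting local-global cohomological condition through the splitting field $L$ of $T$. First, I would note that the noncompactness of $X_S = G_S/T_S$ forces $G_S$ to be noncompact as well (otherwise $X_S$ would be a continuous image of a compact set, hence compact), so $G$ itself has strong approximation with respect to $S$ by Theorem~\ref{SAT}; in particular, $G(K)$ is dense in $G(\A_S)$.

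Taking Galois cohomology of the displayed sequence over $K$ and over $\A_S$ yields a commutative diagram with exact rows
$$
G(K) \to X(K) \stackrel{\partial}{\to} H^1(K,T) \to H^1(K,G),
$$
$$
G(\A_S) \to X(\A_S) \stackrel{\partial_S}{\to} H^1(\A_S,T) \to H^1(\A_S,G),
$$
with vertical localization maps. The key reduction I would carry out is the following: for $x \in X(\A_S)$, using that $\partial_S$ is locally constant (because each $H^1(K_v,T)$ is discrete) and that two points of $X(\A_S)$ with equal image under $\partial_S$ lie in a single $G(\A_S)$-orbit, $x$ is approximable by $X(K)$ precisely when $\partial_S(x)$ is the localization of $\partial(x_0)$ for some $x_0 \in X(K)$; in that case one writes $x = g \cdot x_0$ with $g \in G(\A_S)$ and approximates $g$ by some $g_0 \in G(K)$ via strong approximation for $G$, so $g_0 \cdot x_0 \in X(K)$ is close to $x$. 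Thus strong approximation for $X$ is equivalent to the requirement that every class in $\ker(H^1(\A_S,T) \to H^1(\A_S,G))$ arise, via localization, from $\ker(H^1(K,T) \to H^1(K,G))$.

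Next, since $T$ splits over $L$, Hilbert~90 and inflation identify $H^1(K,T) \cong H^1(L/K, T(L))$ and $H^1(\A_S, T) \cong H^1(L/K, T(\A_{L,\overline{S}}))$, while the $S$-local cohomology becomes $H^1(L/K, T_{\overline{S}})$. From the sequence $1 \to T(L) \to T(\A_L) \to c_L(T) \to 1$ one extracts the connecting map $\delta$ of the statement, whose kernel on $H^1(L/K, T(\A_L))$ is exactly the image of $H^1(K,T)$. Using the decomposition $T(\A_L) = T_{\overline{S}} \cdot T(\A_{L,\overline{S}})$, the condition from the previous paragraph translates as follows: every $\xi \in H^1(L/K, T(\A_{L,\overline{S}}))$ whose image in $H^1(\A_S,G)$ is trivial must become $\delta$-equivalent to a class in $H^1(L/K, T_{\overline{S}})$ that dies in $H^1(L/K, G_{\overline{S}})$. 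Since ``modification by a global class'' is exactly the relation measured by $\ker \delta$, this is precisely the stated inclusion of $\delta$-images.

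The hard part will be this final translation: one must juggle the (non-abelian) $G$-cohomology constraint while moving between adelic cohomology over $K$ and the splitting-field cohomology $H^i(L/K, \cdot)$, and verify that the kernels of the maps into $H^1(\cdot, G)$ correspond correctly under inflation. The Hasse principle for $H^1(K,G)$ (Kneser--Harder--Chernousov for simply connected groups) plays a silent but essential role here, allowing the local-triviality constraints inside and outside $S$ to be assembled into the single global condition recorded by $\delta$.
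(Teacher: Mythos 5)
Your proposal follows exactly the route the paper indicates for this result (which it cites from \cite{Rap} rather than proving in detail): use the exact cohomology sequence of pointed sets attached to the quotient $X=G/T$, reduce strong approximation for $X$ to strong approximation for $G$ (Theorem \ref{SAT}) plus a surjectivity statement on kernels of $H^1(\cdot,T)\to H^1(\cdot,G)$, and then rewrite that statement via Hilbert 90, the splitting field $L$, and the Nakayama--Tate connecting map $\delta$ for $1\to T(L)\to T(\mathbb{A}_L)\to c_L(T)\to 1$. The argument is correct in outline (note only that $1\to T\to G\to X\to 1$ is an exact sequence of varieties with $G$-action, not of groups, and that the vanishing of $H^1(K_v,G)$ at nonarchimedean $v$ together with the Hasse principle is what converts your ``dies in $H^1(\mathbb{A}_S,G)$'' clause into the unconditional inclusion of $\delta$-images in the statement).
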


This proposition yields the criterion for strong approximation for
the quadrics defined by ternary forms we used in Remark 4 of \S~2.4.
It also implies that for $X = G/T$, one can find a finite set of
places $S_0$ (depending on $T$) such that $X$ has strong
approximation with respect to $S$ whenever $S \supset S_0$. It turns
out that this qualitative statement remains valid for quotients by
arbitrary connected reductive subgroups. More precisely, using some
ideas that eventually led him to theorems of the Nakayama-Tate type
for Galois cohomology of arbitrary connected groups, Borovoi proved
the following.
\begin{prop}\label{P:HS3}
{\rm (\cite{Borovoi})} Let $X = G/H$ be the quotient of a connected
absolutely almost  simple algebraic group $G$ over a number field
$K$ by its connected reductive $K$-defined subgroup $H$. There
exists a finite set $S_0$ of places of $K$ such that $X$ has strong
approximation with respect to $S_0$ (and then, of course, it also
has strong approximation with respect to any $S \supset S_0$).
\end{prop}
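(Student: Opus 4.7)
\textbf{Plan for Proposition \ref{P:HS3}.} The strategy is to combine the strong approximation theorem for $G$ (Theorem~\ref{SAT}) with a cohomological analysis of the orbit map $G\to X$, ultimately reducing the obstruction to the toric case handled by Proposition~\ref{P:HS2}. I would begin by enlarging $S_0$ to contain all archimedean places of $K$ together with at least one nonarchimedean place where $G$ is isotropic, so that $G_{S_0}$ is noncompact and Theorem~\ref{SAT} gives density of $G(K)$ in $G(\A_S)$ for every $S\supset S_0$.

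Given any $(x_v)\in X(\A_S)$, the first step is to lift it to $G(\A_S)$. This is possible because $G$ is simply connected and absolutely almost simple, so by Kneser's theorem $H^1(K_v,G)=0$ at every nonarchimedean $v$; together with surjectivity of $G(K_v)\to X(K_v)$ at archimedean $v\in S$ this shows that the orbit map $G(\A_S)\to X(\A_S)$ is surjective. Strong approximation for $G$ then produces a global element of $G(K)$ approximating the lift, whose image in $X$ approximates $(x_v)$ \emph{within the principal $G(K)$-orbit}. The remaining task is thus to handle the other $G(K)$-orbits on $X(K)$: one has to show that for $S\supset S_0$ sufficiently large, the natural map
\[
\ker\bigl(H^1(K,H)\to H^1(K,G)\bigr)\longrightarrow\ker\bigl(H^1(\A_S,H)\to H^1(\A_S,G)\bigr)
\]
is surjective (with the appropriate density properties on fibers).

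The main obstacle is that $H^1$ of a reductive group is in general a nonabelian pointed set, so Poitou--Tate duality is not directly available. Here is where Borovoi's machinery enters: there is a functorial abelianization map $H^1(K,H)\to H^1_{ab}(K,H)$ defined as the hypercohomology $\HH^1(K,[H^{sc}\to H])$ of the crossed module given by the composition of the simply connected cover of $[H,H]$ with the inclusion into $H$. Its fibers are controlled by $H^{sc}$, for which $H^1(K_v,H^{sc})=0$ at every nonarchimedean $v$ by Kneser, so both the global and local problems descend to the abelianized setting. In abelianized form the question becomes one about hypercohomology of a two-term complex of $K$-tori, and for this complex a Poitou--Tate-type exact sequence holds (this is precisely Borovoi's extension of the Nakayama--Tate theorems to arbitrary connected reductive groups). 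That sequence expresses the cokernel of the relevant localization map in terms of a finite $\mathrm{Sha}$-like group and local contributions at a finite set of places determined by the ramification of the splitting fields of the tori appearing in the resolution of $H$.

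Finally, one chooses $S_0$ to contain this finite set of ``bad'' places: the archimedean places, a place of isotropy for $G$, the places that ramify in the splitting field of the maximal toric quotient of $H$ (and of the kernel torus in the Borovoi resolution), and the finitely many places where the relevant hypercohomology fails to vanish. For any $S\supset S_0$, the global-to-local map is then surjective both abelianized and, by the vanishing of $H^1(K_v,H^{sc})$, on the original classes, yielding density of $X(K)$ in $X(\A_S)$. The technically most delicate step is the generalization of Proposition~\ref{P:HS2} from a single torus to Borovoi's abelian complex of tori; once this Nakayama--Tate input is in place, the rest of the argument is a formal chase through the fibration $H\to G\to X$.
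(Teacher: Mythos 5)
The paper offers no proof of this proposition: it is quoted from Borovoi's announcement, and the only hint of method given is the remark that Borovoi used ``ideas that eventually led him to theorems of the Nakayama--Tate type for Galois cohomology of arbitrary connected groups.'' Your plan --- strong approximation for $G$ combined with abelianized Galois cohomology of $H$ via the crossed module $[H^{sc}\to H]$ and a Poitou--Tate sequence for the resulting complex of tori --- is exactly in that spirit, so there is no alternative route to compare; but since the paper supplies nothing to lean on, your sketch must stand on its own, and as written it contains one genuine misstep.

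The misstep is in your ``first step.'' From the exact sequence $G(K_v)\to X(K_v)\to H^1(K_v,H)\to H^1(K_v,G)$, the vanishing $H^1(K_v,G)=1$ (Kneser) does \emph{not} make $G(K_v)\to X(K_v)$ surjective; it says that the $G(K_v)$-orbits on $X(K_v)$ are classified by \emph{all} of $H^1(K_v,H)$, and for reductive, non-simply-connected $H$ --- already for $H$ a nonsplit torus --- this set is nontrivial. So the orbit map $G(\A_{S})\to X(\A_{S})$ is not surjective and a global lift of $(x_v)$ need not exist. (Surjectivity does hold when $H$ is semisimple simply connected; that is precisely the content of Proposition \ref{P:HS1}.) The correct opening move, which you in effect make only in your second paragraph, is to record the local classes $\xi_v\in\ker\bigl(H^1(K_v,H)\to H^1(K_v,G)\bigr)$ attached to $(x_v)$, show that they come from a global class $\xi$, and then run the lifting argument on the twisted orbit $G/{}_{\xi}H$. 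Two further points then need care: (i) the stabilizer of the new base point is a twist of $H$, and since the orbit set $\ker\bigl(H^1(K,H)\to H^1(K,G)\bigr)$ can be infinite for reductive $H$, you cannot enlarge $S_0$ orbit by orbit --- the choice of $S_0$ must be uniform over all twists, which is exactly what the Poitou--Tate sequence for the abelianized complex is supposed to deliver; and (ii) surjectivity of the localization map on kernels is not yet density --- you still need approximation within each local orbit and integrality of the lifts at almost all $v\notin S$, which you only gesture at with ``appropriate density properties on fibers.'' With these repairs the plan is sound, but the load-bearing steps are all deferred to Borovoi's machinery, just as in the paper.
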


We remark in passing that the results on strong approximation in
homogeneous spaces were used to extend Kneser's method for proving
the centrality of the congruence kernel for spinor groups to groups
of other classical types as well as $G_2$ \cite{Rap}, \cite{Rap1},
\cite{Rap-CSP} (cf. also \cite{Tom1}, \cite{Tom2}), to establish
bounded generation of some $S$-arithmetic subgroups in orthogonal
groups \cite{ER}, and to study some Diophantine questions involving
quadratic forms \cite{CX} (we should mention that the results of the
latter work were recently generalized in \cite{Demarche} where the
deviation from strong approximation in a connected $K$-group $G$ has
been expressed in terms of a certain subquotient of the Brauer group
of $G$).

\vskip3mm

\noindent {\bf 6. On the proof of sufficiency in Theorem \ref{SAT}.}
We begin with the following statement that applies to arbitrary
Zariski-dense subgroups.
\begin{lemma}\label{L:SA}
Let $G$ be an absolutely almost simple algebraic $\Q$-group, and let
$\Gamma \subset G(\Z)$ be a Zariski-dense subgroup of $G$. Then for
any prime $p$, the closure $\overline{\Gamma}^{(p)} \subset G(\Z_p)$
is open.
\end{lemma}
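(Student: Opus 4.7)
The plan is to pass to Lie algebras and use the absolute almost simplicity of $G$ to force the Lie algebra of the closure to be everything. Write $H := \overline{\Gamma}^{(p)}$. Since $G(\Z_p)$ is a compact $p$-adic analytic group, $H$ is a closed subgroup and hence, by the $p$-adic analogue of Cartan's theorem on closed subgroups (cf.\ \cite[\S~III.7]{KNV}), $H$ is itself a $p$-adic Lie subgroup. Let $\mathfrak{h} \subset \mathfrak{g} := \mathrm{Lie}(G_{\Q_p})$ be its Lie algebra. Openness of $H$ in $G(\Z_p)$ is equivalent to $\dim_{\Q_p} \mathfrak{h} = \dim_{\Q_p} \mathfrak{g}$, so the goal is to show $\mathfrak{h} = \mathfrak{g}$.

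First I would establish that $\mathfrak{h}$ is $\Ad(G)$-stable in the algebraic sense. Conjugation inside $H$ yields an action of $H$ on itself, and passing to Lie algebras gives $\Ad(h)(\mathfrak{h}) = \mathfrak{h}$ for all $h \in H$; in particular, $\mathfrak{h}$ is preserved by $\Ad(\Gamma)$. Consider the $\Q_p$-closed subscheme
$$
N \ := \ \{\, g \in G \ \vert \ \Ad(g)(\mathfrak{h} \otimes_{\Q_p} \overline{\Q_p}) = \mathfrak{h} \otimes_{\Q_p} \overline{\Q_p} \,\} \ \subset \ G_{\Q_p}
$$
(the normalizer of $\mathfrak{h}$, which is a closed algebraic subgroup). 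Since $\Gamma \subset N(\Q)$ and $\Gamma$ is Zariski-dense in $G$, we get $N = G$, so $\mathfrak{h}_{\overline{\Q_p}} := \mathfrak{h} \otimes_{\Q_p} \overline{\Q_p}$ is $\Ad(G(\overline{\Q_p}))$-invariant, hence is an ideal of $\mathfrak{g}_{\overline{\Q_p}}$.

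Now the assumption that $G$ is absolutely almost simple means that $\mathfrak{g}_{\overline{\Q_p}}$ is a simple Lie algebra, so $\mathfrak{h}_{\overline{\Q_p}}$ is either $0$ or $\mathfrak{g}_{\overline{\Q_p}}$. The first possibility would mean $\dim H = 0$, i.e., $H$ is a discrete closed subgroup of the compact group $G(\Z_p)$, hence finite. But then $\Gamma \subset H$ would be finite, contradicting the Zariski-density of $\Gamma$ in the positive-dimensional group $G$. Therefore $\mathfrak{h} = \mathfrak{g}$, and $H$ is open in $G(\Z_p)$.

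The only non-elementary input is the $p$-adic Cartan theorem guaranteeing that $H$ is automatically a Lie subgroup with a well-defined Lie algebra; everything else is the standard ``invariant subspace/ideal'' dance coupled with the simplicity of the Lie algebra. The argument makes no use of simple connectedness or of any arithmetic property beyond $\Gamma \subset G(\Z) \subset G(\Z_p)$, so in fact it works identically for any Zariski-dense subgroup of $G(\Z_p)$ itself.
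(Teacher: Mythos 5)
Your proof is correct and follows essentially the same route as the paper's: apply the $p$-adic Cartan theorem to the closure, note the Lie algebra is nonzero (since a Zariski-dense subgroup cannot be discrete in the compact group $G(\Z_p)$), and use Zariski-density of $\Gamma$ plus the irreducibility of the adjoint representation (equivalently, simplicity of $\mathfrak{g}_{\overline{\Q_p}}$) to conclude the Lie algebra is all of $\mathfrak{g}_{\Q_p}$. Your closing observation that simple connectedness plays no role here is also consistent with the paper, which only invokes it later in the proof of the full strong approximation theorem.
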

\begin{proof}
Let $\mathfrak{g}$ be the Lie algebra of $G$ as an algebraic group,
so that $\mathfrak{g}_{\Q_p}$ is the Lie algebra of $G(\Z_p)$ as a
$p$-adic analytic group. By a theorem of Cartan (cf.
\cite[Theorem~3.4]{PlR}), $\Delta := \overline{\Gamma}^{(p)}$ is a
$p$-adic Lie group, of positive dimension as $\Gamma$ is
non-discrete in $G(\Z_p)$ (the discreteness would force it to be
finite, and therefore prevent it from being Zariski-dense). So, the
Lie algebra $\mathfrak{h}$ of $\Delta$ as a $p$-adic analytic group
is a nonzero $\Q_p$-subalgebra of $\mathfrak{g}_{\Q_p}$. Clearly,
$\mathfrak{h}$ is invariant under $\mathrm{Ad}\: \Gamma$, so the
Zariski-density of $\Gamma$ in $G$ implies that $\mathfrak{h}
\otimes_{\Q_p} \overline{\Q_p}$ is invariant under $\mathrm{Ad}\:
G$. Since the adjoint representation of $G$ on $\mathfrak{g}$ is
irreducible, we conclude that $\mathfrak{h} = \mathfrak{g}_{\Q_p}$,
and therefore $\Delta$ is open in $G(\Z_p)$ by the Implicit Function
Theorem.
\end{proof}

As we will discuss at the beginning of \S~\ref{S:ZD}, this lemma,
though useful,  falls short of proving any definite form of strong
approximation. We will now indicate additional considerations needed
to prove the sufficiency in Theorem \ref{SAT} in characteristic
zero, following Platonov's original argument \cite{Platonov}.  Let
us assume that $S$ contains all archimedean valuations of $K$. In
this case, it is easy to see from the definition of the topology on
$G(\mathbb{A}_S)$ that strong approximation is equivalent to the
following statement:

\vskip2mm

\begin{center}

{\it for any finite set of places $S_1$ of $K$ which is disjoint
from $S$,

the group $G(\mathcal{O}(S \cup S_1))$ is dense in $G_{S_1} :=
\prod_{v \in S_1} G(K_{v})$.}

\end{center}

\vskip2mm

\noindent To showcase the idea, we will now prove this statement in
the case where $K = \Q$ and $S_1 = \{ p \}$, a single $p$-adic place
such that $G$ is $\Q_p$-isotropic - see \cite[\S 7.4]{PlR} for the
general case. First, by the reduction theory for $S$-arithmetic
groups, $G(\mathcal{O}(S \cup S_1))$ is a {\it lattice} (i.e., a
discrete subgroup of finite covolume) in $G_{S \cup S_1}$, see
\cite[Theorem~5.7]{PlR}. Since by assumption the group $G_S$ is
non-compact, it is not difficult to show (cf. \cite[Lemma
3.17]{PlR}) that $G(\mathcal{O}(S \cup S_1)) \subset G(\Q_p)$ is
nondiscrete (in particular, infinite), and if $\Delta$ denotes the
$p$-adic closure $\overline{G(\mathcal{O}(S \cup S_1))}^{(p)}$ then
$G(\Q_p)/\Delta$ carries a finite invariant measure. Next, the fact
that $G(\mathcal{O}(S \cup S_1))$ is infinite implies that it is
actually Zariski-dense in $G$ (Borel's Density Theorem, cf.
\cite[Theorem 4.10]{PlR}). Taking into account the nondiscreteness
of $G(\mathcal{O}(S \cup S_1))$ in $G(\Q_p)$ and repeating the proof
of Lemma \ref{L:SA}, we conclude that $\Delta$ is open in $G(\Q_p)$.
Then the existence of a finite invariant measure on $G(\Q_p)/\Delta$
implies that $\Delta \subset G(\Q_p)$ is a subgroup of finite index.
On the other hand, since the group $G$ is connected, absolutely
almost simple, simply connected and $\Q_p$-isotropic, by the
Kneser-Tits conjecture over $p$-adic fields we have $G(\Q_p) =
G(\Q_p)^+$, and therefore the group $G(\Q_p)$ does not have any
proper noncentral normal subgroup. In particular, it does not
contain any proper subgroups of finite index, and we obtain that
$\Delta = G(\Q_p)$, as required.

\vskip2mm

This argument breaks down in positive characteristic, first and
foremost, because Cartan's theorem, which is at the heart of the
proof of Lemma \ref{L:SA}, is valid only in characteristic zero. It
should be mentioned that eventually Pink \cite{Pink1} proved a
result which in some sense can be viewed as an analog (or
replacement) of Cartan's theorem. The precise general statement is
too technical for us to discuss here, so we will only indicate what
it yields in one particular case (see Theorem 0.7 in \cite{Pink1}):
{\it Let $G$ be an absolutely simple connected adjoint group over a
local field $F$, and assume that the adjoint representation of $G$
is irreducible. If $\Gamma \subset G(F)$ is a compact Zariski-dense
subgroup, then there exists a closed subfield $E \subset F$ and a
model $H$ of $G$ over $E$ such that $\Gamma$ is open in $H(E)$.}
This sort of result can be used to prove Theorem \ref{SAT} in
positive characteristic,  but the original argument given virtually
simultaneously by Margulis \cite{Marg1} and Prasad \cite{Prasad},
was different. They derived strong approximation (arguing along the
lines indicated above) from the following statement:

\vskip2mm

\begin{center}

{\it Let $G$ be a connected semi-simple algebraic group over a local
field $F$, and

\vskip.2mm

let $H \subset G(F)$ be a nondiscrete closed subgroup such that
$G(F)/H$

\vskip.2mm

carries a finite invariant Borel measure. Then $H \supset G(F)^+$.}

\end{center}

\vskip2mm

\noindent Their argument used ergodic considerations and
representation theory. More than 25 years later, Pink \cite{Pink3}
used his results from \cite{Pink1} to give a purely algebraic proof
of this theorem, and hence of strong approximation.

\section{Strong approximation for Zariski-dense
subgroups}\label{S:ZD}

\noindent {\bf 1. Overview.} The Strong Approximation Theorem
\ref{SAT} gives us {\it precise} information about the adelic
closure of $S$-arithmetic subgroups: for example,  if $G$ is an
algebraic $\Q$-group that has strong approximation with respect to
$S = \{ \infty \}$ then for any matrix realization of $G$, the group
$G(\Z)$ is dense in $G(\hat{\Z}) = \prod_{p} G(\Z_p)$ - cf. \S
\ref{S:AG}. At the same time, as we explained in \S \ref{S:I}, one
can expect a general {\it qualitative} openness result for the
adelic closure of an arbitrary Zariski-dense subgroup. The goal of
this section is to discuss some results in this direction.

\vskip2mm

First, we note one consequence of Lemma \ref{L:SA}. Let $G$ be a
connected absolutely almost simple algebraic $\Q$-group, and let
$\Gamma \subset G(\Z)$ be a Zariski-dense subgroup of $G$. Then
using the fact that $G(\Z_p)$ is a virtually pro-$p$ group, one
easily deduces from Lemma \ref{L:SA} that given a {\it finite} set
$S$ of distinct primes, the closure
$$
\overline{\Gamma}^{(S)} \subset \prod_{p \in S} G(\Z_p)
$$
is open. This  statement is already sufficient for some
applications; for example, it was used in \cite{PrRap-reg} to prove
the existence of generic elements in arbitrary finitely generated
Zariski-dense subgroups $\Gamma \subset G(K)$,  where $G$ is a
semi-simple algebraic group over a finitely generated field $K$ of
characteristic zero; see \cite{GorN}, \cite{JKZ} and \cite{LuRosen}
for more recent work in this direction. (In his talk at the
workshop, G.~Prasad surveyed applications of generic elements to the
analysis of isospectral locally symmetric spaces, cf.
\cite{PrRap-Iso}, \cite{PrRap-NT}.) On the other hand, if we take
$S$ to be the set of {\it all} primes, the best we can get from
Lemma \ref{L:SA} is the following:

\vskip2mm

\hskip.6cm \parbox[t]{10cm}{{\it the closure $\widehat{\Gamma}$ of
$\Gamma$ in $G(\hat{\Z}) = \prod_p G(\Z_p)$ contains $\prod_p W_p$
\centerline{where $W_p \subset G(\Z_p)$ is open for each $p$.}}}
\hfill $(*)$

\vskip2mm

\noindent Of course, this does {\bf not} imply that
$\widehat{\Gamma}$ is open in $G(\hat{\Z})$ - for this we need to
show that actually $W_p = G(\Z_p)$ for almost all $p$. The first
general result in this direction was the following.
\begin{thm}\label{T:MVW}
{\rm (Matthews, Vaserstein, Weisfeiler \cite{MVW})} Let $G$ be a
connected absolutely almost simple simply connected algebraic group
over $\Q$.

\vskip2mm

\noindent {\rm (1)} \parbox[t]{12cm}{If $\Gamma \subset G(\Z)$ is a
Zariski-dense subgroup, then the closure $\widehat{\Gamma} \subset
G(\hat{\Z})$ is open.}

\vskip2mm

\noindent {\rm (2)} \parbox[t]{12cm}{If $\Gamma \subset G(\Q)$ is a
finitely generated Zariski-dense subgroup,  then for some finite set
$S$ of places of $\Q$ containing $\infty$, the closure of $\Gamma$
in the group of $S$-adeles $G(\mathbb{A}_S)$ is open.}
\end{thm}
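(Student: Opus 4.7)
I will focus on part (1); part (2) reduces to it after fixing a matrix realization and a finite generating set of $\Gamma \subset G(\Q)$ and choosing $S$ to consist of $\infty$ together with the finitely many primes at which some generator fails to be integral, so that $\Gamma \subset G(\cO(S))$. The heart of the matter is to establish that $\overline{\Gamma}^{(p)} = G(\Z_p)$ for all but finitely many primes: combining with $(*)$ at the exceptional primes then exhibits an open subgroup of $G(\widehat{\Z})$ of the form $\prod_{p \in S_0} W_p \times \prod_{p \notin S_0} G(\Z_p)$ inside $\widehat{\Gamma}$.

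\textbf{Reduction to surjectivity mod $p$.} Spreading $G$ to a smooth affine group scheme $\underline{G}$ over $\Z[1/N_0]$ (by passing to a matrix realization and inverting finitely many primes) one arranges that $\underline{G}_{\F_p}$ is connected, absolutely almost simple, and simply connected for every $p \nmid N_0$. By Lemma~\ref{L:SA}, $\overline{\Gamma}^{(p)}$ is already open in $G(\Z_p)$ for every prime $p$. For $p \nmid N_0$, Hensel's lemma produces a surjection $G(\Z_p) \twoheadrightarrow \underline{G}(\F_p)$ whose kernel $K_p$ is a pro-$p$ group whose successive principal-congruence quotients are, for $p$ large, irreducible modules for $\underline{G}(\F_p)$ under the adjoint action. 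Combined with the fact that $\underline{G}(\F_p)$ is perfect (a consequence of the Kneser--Tits property over finite fields for simply connected groups, cf.\ Remark~1 of \S\ref{S:AG}), a standard filtration-by-filtration argument then shows that any open subgroup of $G(\Z_p)$ that surjects onto $\underline{G}(\F_p)$ equals $G(\Z_p)$. Thus the problem is reduced to proving $\rho_p(\Gamma) = \underline{G}(\F_p)$ for almost all $p$.

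\textbf{The main obstacle.} Fix $\gamma_1, \ldots, \gamma_k \in \Gamma$ whose Zariski closure in $G$ is all of $G$; a standard spreading-out argument shows that for almost all $p$, the reductions $\rho_p(\gamma_i)$ still generate a Zariski-dense subgroup of $\underline{G}_{\overline{\F_p}}$. The genuinely difficult step is to promote Zariski-density to the actual equality $\rho_p(\Gamma) = \underline{G}(\F_p)$. This rests on a structural classification of Zariski-dense subgroups of the finite groups of Lie type $\underline{G}(\F_p)$: for $p$ sufficiently large, any proper Zariski-dense subgroup of $\underline{G}(\F_p)$ must be contained in one of a bounded (in terms of the type of $G$) collection of proper $\F_p$-defined subgroups; were $\rho_p(\Gamma)$ to lie in such a subgroup for infinitely many $p$, a pigeonhole/specialization argument would transfer the containment back to the generic fiber and contradict the Zariski-density of $\Gamma$ in $G$. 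In the original proof of Matthews--Vaserstein--Weisfeiler, the required classification input is supplied by the classification of finite simple groups.

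\textbf{Assembly and part (2).} Combining the previous two steps yields $\overline{\Gamma}^{(p)} = G(\Z_p)$ for all $p$ outside a finite set $S_0$; at the remaining primes $(*)$ provides open subgroups $W_p \subset G(\Z_p)$ inside $\overline{\Gamma}^{(p)}$, and the product of all these is an open subgroup of $G(\widehat{\Z})$ contained in $\widehat{\Gamma}$, proving (1). For part (2), with $S$ chosen as in the Plan so that $\Gamma \subset G(\cO(S))$, the argument of (1) applied on the primes $v \notin S$ shows that the closure of $\Gamma$ in $\prod_{v \notin S} G(\cO_v)$ is open; since this group is itself open in $G(\mathbb{A}_S)$ by definition of the $S$-adelic topology, the closure of $\Gamma$ in $G(\mathbb{A}_S)$ is open, as required.
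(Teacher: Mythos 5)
Your overall architecture is the right one and matches the paper's: use Lemma \ref{L:SA} together with $(*)$ to reduce part (1) to showing $\overline{\Gamma}^{(p)} = G(\Z_p)$ for almost all $p$; reduce that, via a lifting statement for closed subgroups surjecting onto $\Gu^{(p)}(\F_p)$ (Proposition \ref{P:MVW}), to the equality $\rho_p(\Gamma) = \Gu^{(p)}(\F_p)$ for almost all $p$; and handle part (2) by putting $\Gamma$ inside $G(\mathcal{O}(S))$ for a suitable finite $S$. The problem is that at the one step carrying essentially all of the difficulty --- proving $\rho_p(\Gamma) = \Gu^{(p)}(\F_p)$ for almost all $p$, which is Theorem \ref{T:Red} --- you do not give an argument. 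You appeal to ``a structural classification of Zariski-dense subgroups of the finite groups of Lie type'' plus a ``pigeonhole/specialization argument,'' but this is a statement that a proof exists, not a proof. Worse, the formulation is ill-posed: $\rho_p(\Gamma)$ is a finite subgroup of $\Gu^{(p)}(\overline{\F_p})$, hence Zariski-closed, so ``the reductions $\rho_p(\gamma_i)$ generate a Zariski-dense subgroup of $\Gu^{(p)}_{\overline{\F_p}}$'' is exactly the conclusion you are trying to reach, not something a spreading-out argument can deliver. What spreading out actually gives is weaker (e.g., that $\rho_p(\Gamma)$ is not contained in a proper algebraic subgroup of complexity bounded independently of $p$), and bridging from that to surjectivity is the real content. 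Note also that your classification principle, as stated, is false without simple connectedness: for non-simply-connected $\Gu^{(p)}$ the proper normal subgroup $\Gu^{(p)}(\F_p)^+$ contains Zariski-dense subgroups in any reasonable sense, so your sketch never identifies where the hypothesis that $G$ is simply connected enters the mod-$p$ surjectivity step.

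The paper fills this gap with a concrete chain: a version of Jordan's theorem for subgroups of $GL_n(\F_p)$ of order prime to $p$ shows that $p$ divides $|\rho_p(\Gamma)|$ for almost all $p$ (otherwise the reduction of the Zariski-dense subgroup $[\Gamma^{(j)},\Gamma^{(j)}]$ would be trivial); Nori's Theorem \ref{T:Nori} then produces a connected $\F_p$-subgroup $\widetilde{H}$ with $\rho_p(\Gamma)^+ = \widetilde{H}(\F_p)^+$; the absolute irreducibility of $\mathrm{Ad}\:\rho_p(\Gamma)$ on $\underline{\mathfrak{g}}^{(p)}$ forces $\widetilde{H} = \Gu^{(p)}$; and finally the Kneser--Tits property over finite fields for the simply connected group $\Gu^{(p)}$ gives $\Gu^{(p)}(\F_p)^+ = \Gu^{(p)}(\F_p)$, whence $\rho_p(\Gamma) = \Gu^{(p)}(\F_p)$. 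Nori's theorem is precisely the rigorous substitute for the vague classification you invoke (the original Matthews--Vaserstein--Weisfeiler argument did use the classification of finite simple groups instead, as you note, but that route also requires a genuine argument, not a pigeonhole remark). Your ``Reduction to surjectivity mod $p$'' paragraph is fine in outline and is a legitimate variant of the paper's Frattini-subgroup argument (Lemma \ref{L:SL2} and its generalization), and your treatment of part (2) is correct; but as it stands the proof has a hole exactly where Theorem \ref{T:Red} should be.
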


The paper \cite{MVW} appeared in 1984, but the interest in these
sorts of results arose at least 20 years earlier in connection with
the study of Galois representations on torsion points of elliptic
curves. In fact, in his book \cite{Serre} on $\ell$-adic
representations, Serre pretty much had this theorem for $G =
\mathrm{SL}_2$ (at least, all the ingredients of the proof were
there).

\vskip2mm

Parts (1) and (2) are proved in the same way,  so let us focus our
discussion on the proof of (1) as this will allow us to keep our
notations simple. First, it enough to prove that for almost all
primes $p$, the closure $\overline{\Gamma}^{(p)} \subset G(\Z_p)$
coincides with $G(\Z_p)$. This reduction step is achieved using
$(*)$ in conjunction with the fact that for almost all primes $p$,
the group $G$ has a smooth reduction $\Gu^{(p)}$ modulo $p$ and the
groups $\Gu^{(p)}(\mathbb{F}_p)$ are pairwise non-isomorphic almost
simple groups (for the reader who is interested only in the case $G
= \mathrm{SL}_n$, we will indicate that here, of course, $\Gu^{(p)}
= \mathrm{SL}_n / \mathbb{F}_p$, and the structural facts quoted
above are well-known). Next, it turns out that for almost all $p$,
proving that $\overline{\Gamma}^{(p)} = G(\Z_p)$ reduces to showing
that  the reduction map $\rho_p \colon G(\Z_p) \to
\Gu^{(p)}(\mathbb{F}_p)$ has the property $\rho_p(\Gamma) =
\Gu^{(p)}(\mathbb{F}_p)$.
\begin{prop}\label{P:MVW}
{\rm (cf. \cite[7.3]{MVW})}For almost all $p$, if $\Delta \subset
G(\Z_p)$ is a closed subgroup such that $\rho_p(\Delta) =
\Gu^{(p)}(\mathbb{F}_p)$ then $\Delta = G(\Z_p)$.
\end{prop}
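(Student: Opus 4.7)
The plan is to show by induction on $n \geq 1$ that the reduction $\rho_{p^n}(\Delta) = \Gu^{(p)}(\Z/p^n\Z)$; since $\Delta$ is closed and $G(\Z_p) = \lim_{\longleftarrow} \Gu^{(p)}(\Z/p^n\Z)$, this forces $\Delta = G(\Z_p)$. The base case $n=1$ is the hypothesis, and I restrict to the cofinite set of primes for which $G$ has smooth reduction $\Gu^{(p)}$.

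Set $K_n = \ker(G(\Z_p) \to \Gu^{(p)}(\Z/p^n\Z))$. The usual map $1 + p^n X \mapsto X$ identifies $K_n/K_{n+1} \simeq \mathfrak{g}(\F_p)$ for $n \geq 1$, equivariantly with respect to the conjugation action of $\Gu^{(p)}(\F_p)$ on the left and its adjoint action on $\mathfrak{g}(\F_p)$ on the right. Assuming $H_n := \rho_{p^n}(\Delta) = \Gu^{(p)}(\Z/p^n\Z)$, the group $H_{n+1} := \rho_{p^{n+1}}(\Delta)$ surjects onto $\Gu^{(p)}(\Z/p^n\Z)$, and $H_{n+1} \cap \mathfrak{g}(\F_p)$ is normal in $H_{n+1}$ and hence stable under the induced adjoint action of $\Gu^{(p)}(\F_p)$. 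For $p$ sufficiently large this representation is irreducible, so either $H_{n+1} \cap \mathfrak{g}(\F_p) = \mathfrak{g}(\F_p)$ (whence $H_{n+1} = \Gu^{(p)}(\Z/p^{n+1}\Z)$ and the induction advances), or $H_{n+1} \cap \mathfrak{g}(\F_p) = 0$, in which case $H_{n+1}$ maps isomorphically onto $\Gu^{(p)}(\Z/p^n\Z)$ and furnishes a section of the congruence extension
\begin{equation*}
1 \longrightarrow \mathfrak{g}(\F_p) \longrightarrow \Gu^{(p)}(\Z/p^{n+1}\Z) \longrightarrow \Gu^{(p)}(\Z/p^n\Z) \longrightarrow 1.
\end{equation*}

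The central step is to rule out this second alternative by showing that, for $p$ large, the above extension admits no section at any level $n \geq 1$. A section would lift every element of $\Gu^{(p)}(\Z/p^n\Z)$ to an element of the same order in $\Gu^{(p)}(\Z/p^{n+1}\Z)$, so producing a single element violating this suffices. For $n = 1$, take a nontrivial unipotent $u \in \Gu^{(p)}(\F_p)$ of order $p$; writing any lift $\tilde u \in \Gu^{(p)}(\Z/p^2\Z)$ in the form $\exp(\tilde \nu)$ with $\tilde \nu \in \mathfrak{g}(\Z/p^2\Z)$ reducing to a nonzero nilpotent $\nu$ (valid for $p$ exceeding the nilpotency class of $\mathfrak{g}$, so that the $p$-adic exponential converges and is bijective on the appropriate domain), one computes $\tilde u^p = \exp(p \tilde \nu) \equiv 1 + p \tilde \nu \pmod{p^2}$, which is nontrivial because $\tilde \nu \not\equiv 0 \pmod p$; hence $\tilde u$ has order $p^2$. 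For $n \geq 2$, an analogous computation applied to $x = \exp(p \xi) \in \Gu^{(p)}(\Z/p^n\Z)$ with $\xi \in \mathfrak{g}(\Z/p^{n-1}\Z)$ nonzero mod $p$ shows that $x$ has order $p^{n-1}$ while every lift has order $p^n$.

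The remaining ingredient is the irreducibility of the adjoint representation of $\Gu^{(p)}(\F_p)$ on $\mathfrak{g}(\F_p)$; this is classical for $p$ exceeding the Coxeter number of $G$ and avoiding the torsion primes of its root datum. The hard part of the argument is genuinely the non-splitness of the congruence extension, handled by the order computation just described. Once both inputs are in force for almost all $p$, the induction closes and gives $\Delta = G(\Z_p)$, as required.
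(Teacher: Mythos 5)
Your argument is correct in substance, but it follows a genuinely different route from the one in the paper. The paper proves the proposition by first treating $G=\mathrm{SL}_2$ (Lemma \ref{L:SL2}, following Serre): a single $p$-th power computation on a lift of a unipotent shows that the image of $\Delta\cap SL_2(\Z_p,p)$ in $SL_2(\Z_p,p)/SL_2(\Z_p,p^2)\simeq\mathfrak{sl}_2(\F_p)$ is nonzero, irreducibility of the adjoint action of $SL_2(\F_p)$ upgrades this to the full congruence quotient, and then the fact that $SL_2(\Z_p,p^2)$ is the Frattini subgroup of the pro-$p$ group $SL_2(\Z_p,p)$ finishes everything in one stroke, with all deeper congruence levels handled simultaneously; the general case is then reduced to $\mathrm{SL}_2$ by using that $G$ is quasi-split mod $p$ for almost all $p$, so that $G(\Z_p)\supset SL_2(\Z_p)$. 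You instead work with general $G$ directly and replace the Frattini step by a level-by-level induction, ruling out a complement to $K_n/K_{n+1}\simeq\mathfrak{g}(\F_p)$ at each stage by exhibiting an element none of whose lifts preserves its order. The two proofs share the same essential inputs (irreducibility of $\mathrm{Ad}$ on $\mathfrak{g}(\F_p)$ and a $p$-th power computation on an exponential), but the Frattini argument is more economical --- it proves the non-splitness at all levels $n\geqslant 2$ at once and needs the unipotent computation only at level $1$ --- whereas your induction is more elementary and self-contained (no appeal to the structure of $p$-adic analytic pro-$p$ groups) at the cost of repeating the order computation at every level and of invoking the general irreducibility of the adjoint representation of $\Gu^{(p)}(\F_p)$, which the paper sidesteps via the $\mathrm{SL}_2$ reduction. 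Two small points you should make explicit: in the $n=1$ step, the assertion that \emph{every} lift of $u$ is of the form $\exp(\tilde\nu)$ with $\tilde\nu$ lifting $\nu$ needs a word of justification (Baker--Campbell--Hausdorff, or, more simply, observe that all lifts of $u$ have the same $p$-th power because the operator $\sum_{i=0}^{p-1}\mathrm{Ad}(u)^{-i}$ vanishes on $\mathfrak{g}(\F_p)$ once $p-1$ exceeds the nilpotency index of $\mathrm{ad}\,\nu$); and in the step $n\geqslant 2$ you should note that the lifts of $x$ commute with the kernel $K_n/K_{n+1}$ (the conjugation action factors through reduction mod $p$ and $x\equiv 1$), so that all lifts indeed share the same $p^{n-1}$-th power.
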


The proof for $G = \mathrm{SL}_2$ was given by Serre \cite[Ch. IV,
3.4]{Serre}.
\begin{lemma}\label{L:SL2}
Let $\Delta \subset SL_2(\mathbb{Z}_p)$, where $p > 3$, be a closed
subgroup such that for the reduction map $\rho_p \colon SL_2(\Z_p)
\to SL_2(\mathbb{F}_p)$ we have $\rho_p(\Delta) =
SL_2(\mathbb{F}_p)$. Then $\Delta = SL_2(\Z_p)$.
\end{lemma}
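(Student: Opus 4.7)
The plan is to prove by induction on $n \geq 1$ that $\rho_{p^n}(\Delta) = SL_2(\Z/p^n\Z)$, where $\rho_{p^n}$ denotes reduction modulo $p^n$. Since $\Delta$ is closed in the compact group $SL_2(\Z_p)$, which is the inverse limit of the $SL_2(\Z/p^n\Z)$, this suffices to give $\Delta = SL_2(\Z_p)$. The base case $n = 1$ is exactly the hypothesis.

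For the inductive step, let $N_n := \ker\bigl(\pi_n \colon SL_2(\Z/p^{n+1}\Z) \to SL_2(\Z/p^n\Z)\bigr)$. For $n \geq 1$, the assignment $I + p^n X \mapsto X \bmod p$ defines a canonical isomorphism of additive groups $N_n \cong \mathfrak{sl}_2(\mathbb{F}_p)$, and the conjugation action of $SL_2(\Z/p^{n+1}\Z)$ on $N_n$ factors through $SL_2(\mathbb{F}_p)$ as the adjoint representation. Set $H := \rho_{p^{n+1}}(\Delta)$; the inductive hypothesis gives $\pi_n(H) = SL_2(\Z/p^n\Z)$, and so $V := H \cap N_n$ is an $\mathbb{F}_p$-subspace of $\mathfrak{sl}_2(\mathbb{F}_p)$ stable under the full adjoint action of $SL_2(\mathbb{F}_p)$. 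For $p > 2$ this representation is absolutely irreducible, so $V$ is either $0$ or all of $\mathfrak{sl}_2(\mathbb{F}_p)$; in the latter case $H = SL_2(\Z/p^{n+1}\Z)$ and the induction advances.

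The crux is to exclude $V = 0$. In that case $\pi_n|_H$ would be an isomorphism, providing a group-theoretic section $s \colon SL_2(\Z/p^n\Z) \to SL_2(\Z/p^{n+1}\Z)$ of $\pi_n$. Applying $s$ to the unipotent $u = \bigl(\begin{smallmatrix} 1 & 1 \\ 0 & 1 \end{smallmatrix}\bigr)$, which has order exactly $p^n$ in $SL_2(\Z/p^n\Z)$, the element $v := s(u^{p^{n-1}})$ must satisfy $v^p = s(u^{p^n}) = I$ while projecting under $\pi_n$ to $\bigl(\begin{smallmatrix} 1 & p^{n-1} \\ 0 & 1 \end{smallmatrix}\bigr)$. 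I would derive a contradiction by writing $v = I + p^{n-1}(E_{12} + pM')$ for some $M' \in M_2(\Z/p^2\Z)$ and expanding $v^p$ via the binomial theorem. A $p$-adic valuation count, using $v_p\bigl(\binom{p}{k}\bigr) \geq 1$ for $1 \leq k \leq p-1$ together with the hypothesis $p \geq 5$ to handle the extremal term $p^{(n-1)p} N^p$ (which for $n = 1$ requires the direct check that $N^p \equiv 0 \pmod{p^2}$ when $p \geq 5$), shows that every contribution beyond the linear one vanishes modulo $p^{n+1}$. One is left with $v^p \equiv I + p^n E_{12} \not\equiv I \pmod{p^{n+1}}$, the desired contradiction.

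The principal obstacle is this explicit non-splitting computation, and the hypothesis $p > 3$ is sharp precisely here: when $p = 3$ the extremal term $\binom{p}{p} N^p = N^3$ carries only a single factor of $p$ rather than two, and its contribution can be arranged to cancel the leading $p^n E_{12}$ term, so that a section of $\pi_1$ actually exists --- correspondingly, the lemma itself is known to fail at $p = 3$, where $SL_2(\mathbb{F}_3)$ embeds as a finite subgroup of $SL_2(\Z_3)$. Everything else --- closedness of $\Delta$, the $SL_2(\mathbb{F}_p)$-module structure on $N_n$, and the irreducibility of the adjoint representation --- is routine for any odd $p$.
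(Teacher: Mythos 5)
Your proof is correct. Both arguments rest on the same two pillars --- a binomial-expansion computation showing that a lift of an order-$p$ unipotent to the next congruence level has nontrivial $p$-th power, and the irreducibility of the adjoint action of $SL_2(\mathbb{F}_p)$ on $\mathfrak{sl}_2(\mathbb{F}_p)$ --- but you assemble them differently. The paper performs the power computation exactly once: it takes $g \in \Delta$ lifting the standard upper unipotent, shows $g^p \equiv I + pE_{12} \pmod{p^2}$, deduces via irreducibility that $\Delta \cap SL_2(\Z_p, p)$ surjects onto $SL_2(\Z_p,p)/SL_2(\Z_p,p^2) \simeq \mathfrak{sl}_2(\mathbb{F}_p)$, and then finishes in one stroke by invoking the fact that $SL_2(\Z_p,p^2)$ is the Frattini subgroup of the pro-$p$ group $SL_2(\Z_p,p)$. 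You instead replace the Frattini step by an explicit induction over all congruence levels, applying the irreducibility argument to each graded quotient $N_n \simeq \mathfrak{sl}_2(\mathbb{F}_p)$ and excluding the degenerate case $V = 0$ by a non-splitting computation that generalizes the paper's calculation from level one to level $n$ (with $n = 1$, as you observe, being the only place where $p > 3$ is genuinely needed; for $n \geqslant 2$ the extra powers of $p^{n-1}$ make the valuation count trivial). Your route is more elementary in that it avoids identifying the Frattini subgroup of $SL_2(\Z_p,p)$ --- a standard but not entirely free fact about uniform pro-$p$ groups --- at the cost of carrying the valuation bookkeeping through every level, whereas the paper localizes all computation at the first level. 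Your closing remarks are also accurate: the lemma does fail at $p = 3$, since the binary tetrahedral group $SL_2(\mathbb{F}_3)$ embeds in $SL_2(\Z_3)$ (e.g.\ via the norm-one units of the Hurwitz order, the quaternion algebra $(-1,-1)$ being split at $3$) and maps isomorphically onto $SL_2(\mathbb{F}_3)$ under reduction because the congruence subgroup $SL_2(\Z_3,3)$ is torsion-free.
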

\begin{proof}
By assumption, there exists $g \in \Delta$ such that
$$
g = \left( \begin{array}{cc} 1 & 1 \\ 0 & 1 \end{array}  \right) +
ps \ \ \text{with} \ \ s \in M_2(\Z_p).
$$
We claim that
\begin{equation}\label{E:p-power}
g^p = \left( \begin{array}{cc} 1 & p \\ 0 & 1 \end{array}  \right) +
p^2t \ \ \text{with} \ \ t \in M_2(\Z_p).
\end{equation}
Indeed,
$$
g^p = \left(I_2 + \left( \left( \begin{array}{cc} 0 & 1 \\ 0 & 0
\end{array}  \right) + ps\right) \right)^p =
$$
$$
I_2 + p \left( \left( \begin{array}{cc} 0 & 1 \\ 0 & 0
\end{array}  \right) + ps \right) + \binom{p}{2} \left( \left( \begin{array}{cc} 0 & 1 \\ 0 & 0
\end{array}  \right) + ps\right)^2 + \cdots + \left( \left( \begin{array}{cc} 0 & 1 \\ 0 & 0
\end{array}  \right) + ps\right)^p.
$$
But clearly
$$
\left( \left( \begin{array}{cc} 0 & 1 \\ 0 & 0
\end{array}  \right) + ps\right)^k \equiv O_2 (\mathrm{mod}\: p) \ \
\text{for any} \ \ k \geqslant 2,
$$
and in fact
$$
\left( \left( \begin{array}{cc} 0 & 1 \\ 0 & 0
\end{array}  \right) + ps\right)^k \equiv O_2 (\mathrm{mod}\: p^2) \
\ \text{for any} \ \ k \geqslant 4
$$
as $\displaystyle \left( \begin{array}{cc} 0 & 1 \\ 0 & 0
\end{array} \right)^2 = O_2$ (the zero matrix). So, since $p > 3$,
the equation (\ref{E:p-power}) follows.

As $g^p \in \Delta$, we conclude from (\ref{E:p-power}) that the
image $\Phi$ of the intersection $\Delta \cap SL_2(\Z_p , p)$ with
the congruence subgroup modulo $p$ in
$$
SL_2(\Z_p , p) / SL_2(\Z_p , p^2) \simeq
\mathfrak{sl}_2(\mathbb{F}_p),
$$
where $\mathfrak{sl}_2$ is the Lie algebra of $\mathrm{SL}_2$ (i.e.,
$2 \times 2$-matrices with trace zero), is {\it nontrivial}. On the
other hand, $\Phi$ is obviously invariant under $\Delta$, and as
$\rho_p(\Delta) = SL_2(\mathbb{F}_p)$, it is actually invariant
under $SL_2(\mathbb{F}_p)$. But since $p \neq 2$, the group
$SL_2(\mathbb{F}_p)$ acts on $\mathfrak{sl}_2(\mathbb{F}_p)$
irreducibly, implying that $\Delta \cap SL_2(\Z_p , p)$ surjects
onto $SL_2(\Z_p , p)/SL_2(\Z_p , p^2)$. However, $SL_2(\Z_p , p)$ is
in fact the Frattini subgroup of the pro-$p$ group $SL_2(\Z_p , p)$,
so the latter fact implies that $\Delta \cap SL_2(\Z_p , p) =
SL_2(\Z_p , p)$, and our claim follows.
\end{proof}

The general case in Proposition \ref{P:MVW} is obtained by reduction
to the case of $\mathrm{SL}_2$. For this one observes that  the
group $G$ is quasi-split, and therefore $G(\Z_p)$ contains $H =
SL_2(\Z_p)$, for almost all $p$. We refer the reader to \cite{MVW}
for further details. (Note that one needs to argue a bit more
carefully on p. 529 in \cite{MVW} to make sure that $\Delta \cap H$
maps onto $SL_2(\mathbb{F}_p)$ surjectively; this can be achieved by
choosing a special $H$.)

\vskip1mm

So, to complete the proof of (both parts of) Theorem \ref{T:MVW},
one needs to prove the following.
\begin{thm}\label{T:Red}
Let $G$ be a connected absolutely almost simple simply connected
algebraic group over $\Q$, and let $\Gamma \subset G(\Q)$ be a
finitely generated Zariski-dense subgroup. Then there exists a
finite set of primes  $\Pi = \{p_1, \ldots , p_r\}$ such that

\vskip2mm

\noindent {\rm (1)} $\displaystyle \Gamma \subset G(\Z_{\Pi})$ where
$\Z_{\Pi} = \Z\left[ \frac{1}{p_1}, \ldots , \frac{1}{p_r} \right]$;

\vskip1mm

\noindent {\rm (2)} for $p \notin \Pi$ there exists a smooth
reduction $\Gu^{(p)}$;

\vskip1mm

\noindent {\rm (3)} \parbox[t]{12cm}{if $p \notin \Pi$ and $\rho_p
\colon G(\Z_p) \to \Gu^{(p)}(\mathbb{F}_p)$ is the corresponding
reduction map then $\rho_p(\Gamma) = \Gu^{(p)}(\mathbb{F}_p)$.}
\end{thm}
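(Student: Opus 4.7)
Parts (1) and (2) are routine spreading-out, while (3) carries the substantive content.

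For (1), fix a matrix realization $G \hookrightarrow \mathrm{GL}_n$ and a finite generating set of $\Gamma$; the denominators appearing in the matrix entries of these generators involve only finitely many primes, which we include in $\Pi$. For (2), take the scheme-theoretic closure $\Gu$ of $G$ inside $\mathrm{GL}_n / \Z_\Pi$ and enlarge $\Pi$ to absorb the finitely many primes of bad reduction, so that $\Gu^{(p)}$ is smooth, connected, absolutely almost simple and simply connected over $\mathbb{F}_p$ for every $p \notin \Pi$.

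For (3), I would proceed in two stages. The first stage propagates Zariski-density from $\Gamma$ to the reductions. By Noetherianness of the Zariski topology there exist finitely many elements $w_1, \ldots, w_N \in \Gamma$ (products of the given generators) whose Zariski-closure in $G$ already equals $G$. Viewing these as sections of $\Gu / \Z_\Pi$, their fibre-wise Zariski-closure is a closed subscheme of $\Gu$ whose generic fibre is $G_\Q$, and a standard constructibility argument shows that the locus of primes $p$ where this closure drops is finite; after absorbing it into $\Pi$, the set $\rho_p(\Gamma)$ is Zariski-dense in $\Gu^{(p)}_{\bar{\mathbb{F}}_p}$ for every $p \notin \Pi$.

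The main obstacle is the second stage: upgrading Zariski-density to surjectivity. The claim is that for $p$ sufficiently large (enlarge $\Pi$ one last time), every Zariski-dense subgroup $H$ of $\Gu^{(p)}(\mathbb{F}_p)$ in the algebraic group $\Gu^{(p)}_{\bar{\mathbb{F}}_p}$ coincides with $\Gu^{(p)}(\mathbb{F}_p)$. This is the technical heart of the Matthews-Vaserstein-Weisfeiler argument and depends on the classification of maximal subgroups of finite groups of Lie type. A proper Zariski-dense $H \subsetneq \Gu^{(p)}(\mathbb{F}_p)$ would have to fall into one of a few classes: contained in the $\mathbb{F}_p$-points of a proper algebraic subgroup (excluded by Zariski-density), a subfield subgroup $\Gu^{(p)}(\mathbb{F}_q)$ for some $\mathbb{F}_q \subsetneq \mathbb{F}_p$ (impossible since $\mathbb{F}_p$ is prime), or a member of one of finitely many exceptional families whose orders are uniformly bounded in $p$ (excluded since $|\Gu^{(p)}(\mathbb{F}_p)| \sim p^{\dim G}$ grows with $p$ by Lang-Weil). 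The simply-connectedness hypothesis enters precisely here via the Kneser-Tits phenomenon over $\mathbb{F}_p$: it guarantees that $\Gu^{(p)}(\mathbb{F}_p)$ has no proper noncentral normal subgroups, ruling out the ``almost full'' proper subgroups that would otherwise survive the classification.
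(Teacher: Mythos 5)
Your handling of (1), (2), and the spreading-out step is fine and matches the paper, and your overall strategy for (3) --- reduce to a statement about subgroups of $\Gu^{(p)}(\mathbb{F}_p)$ and settle it via the classification of maximal subgroups of finite groups of Lie type --- is the route of the original Matthews--Vaserstein--Weisfeiler paper, which the article explicitly contrasts with the Nori-based argument it actually gives. So the plan is legitimate in principle, but as written your second stage has two genuine gaps. First, the pivot ``$\rho_p(\Gamma)$ is Zariski-dense in $\Gu^{(p)}_{\overline{\mathbb{F}}_p}$'' is vacuous: $\rho_p(\Gamma)$ is a finite set of points, hence Zariski-closed, and can never be dense in a positive-dimensional variety (the same confusion infects your ``fibre-wise closure of finitely many sections''). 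What a constructibility/Burnside argument genuinely delivers is that $\mathrm{Ad}\: \rho_p(\Gamma)$ acts absolutely irreducibly on $\underline{\mathfrak{g}}^{(p)}$ for almost all $p$, hence that $\rho_p(\Gamma)$ is not contained in any proper \emph{positive-dimensional} subgroup. That is strictly weaker than what you need, because it says nothing about the finite ``exceptional'' subgroups (e.g.\ $2.A_5 \subset SL_2(\mathbb{F}_p)$ lies in no proper positive-dimensional subgroup). Second, your exclusion of precisely that exceptional class is a non sequitur: the growth of $\vert \Gu^{(p)}(\mathbb{F}_p)\vert \sim p^{\dim G}$ does not prevent $\rho_p(\Gamma)$ from being a bounded exceptional subgroup. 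What you need is that $\vert \rho_p(\Gamma) \vert$ eventually exceeds the uniform bound; this is true ($\Gamma$ is infinite and finitely generated, hence by Schur's theorem contains an element $\gamma$ of infinite order, and for each $C$ only finitely many $p$ satisfy $\rho_p(\gamma)^{C!} = 1$), but it is a separate argument you must supply.

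The paper avoids the classification entirely. It first forces $p$ to divide $\vert \rho_p(\Gamma) \vert$ by a version of Jordan's theorem: with $j = \mathbf{j}(n)$, $\Gamma^{(j)} = \langle \gamma^j \rangle$ and $\Phi = [\Gamma^{(j)} , \Gamma^{(j)}]$ (nontrivial and Zariski-dense), a reduction of order prime to $p$ would admit an abelian normal subgroup of index dividing $j$ and hence kill $\rho_p(\Phi)$, which is impossible for almost all $p$. It is exactly this step that disposes of your bounded exceptional subgroups. Nori's theorem then attaches to $H = \rho_p(\Gamma)$ a connected algebraic subgroup $\widetilde{H}$ with $H^+ = \widetilde{H}(\mathbb{F}_p)^+$; its nonzero Lie algebra is $\mathrm{Ad}\: \rho_p(\Gamma)$-invariant, so absolute irreducibility gives $\widetilde{H} = \Gu^{(p)}$, and Kneser--Tits over $\mathbb{F}_p$ (this is where simple connectedness enters, as you correctly anticipated) gives $\Gu^{(p)}(\mathbb{F}_p)^+ = \Gu^{(p)}(\mathbb{F}_p)$, whence $H = \Gu^{(p)}(\mathbb{F}_p)$. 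If you wish to keep the CFSG-based route, you must (i) replace ``Zariski-dense'' by a correct statement about not stabilizing proper subspaces of $\underline{\mathfrak{g}}^{(p)}$, with a precise citation of the subgroup classification in the form you use it, and (ii) add the order-growth argument above; otherwise the Jordan--Nori argument is the cleaner path.
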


The conditions (1) and (2) are routine (in fact, (1) holds
automatically if $\Gamma \subset G(\Z)$), so the main point is to
ensure condition (3). The general idea is the following. Let
$\mathfrak{g}$ and $\underline{\mathfrak{g}}^{(p)}$ be the Lie
algebras of $G$ and $\Gu^{(p)}$. Since $\Gamma$ is Zariski-dense in
$G$, we conclude that $\mathrm{Ad}\: \Gamma$ acts on
$\mathfrak{g}_{\Q}$ absolutely irreducibly. By Burnside's Theorem
this means that $\mathrm{Ad}\: \Gamma$ spans $\mathrm{End}_{\Q}\:
\mathfrak{g}_{\Q}$ as a $\Q$-vector space. Excluding finitely many
primes, we can achieve that for any of the remaining primes $p$, the
group $\mathrm{Ad}\: \rho_p(\Gamma)$ acts on
$\underline{\mathfrak{g}}^{(p)}_{\mathbb{F}_p}$ absolutely
irreducibly. This eventually implies that for almost all $p$ we have
$\rho_p(\Gamma) = \Gu^{(p)}(\mathbb{F}_p)$. This implication would
be obvious if we could say that say that  $\rho_p(\Gamma)$ is
necessarily of the form $H(\mathbb{F}_p)$, where $H \subset
\Gu^{(p)}$ is some connected algebraic $\mathbb{F}_p$-subgroup.
(Indeed, then the Lie algebra $\mathfrak{h}$ of $H$ would be a
nonzero $\rho_p(\Gamma)$-invariant subspace of
$\underline{\mathfrak{g}}^{(p)}$, so $\mathfrak{h} =
\underline{\mathfrak{g}}^{(p)}$ and $H = \Gu^{(p)}$, as $\Gu^{(p)}$
is connected for almost all $p$, yielding the required fact.) Of
course, such an a priori description of $\rho_p(\Gamma)$ would be
too much to hope for, but important information along these lines,
which is sufficient for the proof of Theorem \ref{T:Red}, is
contained in a theorem due to Nori \cite{Nori}

\vskip3mm

\noindent {\bf 2. Theorem of Nori.} Let $H$ be an {\it arbitrary
subgroup} of $GL_n(\mathbb{F}_p)$. Set
$$
X = \{ x \in H \: \vert \: x^p = 1 \}
$$
(we will write $1$ in place of $I_n$ to simplify notations). Note
that if we assume that $p > n$ (which we will throughout this
subsection), then the condition $x^p = 1$ characterizes precisely
unipotent elements, i.e. is equivalent to the condition $(x - 1)^n =
0$. For $x \in X$, we can define
$$
\log x := - \sum_{i = 1}^{p-1} \frac{(1 - x)^i}{i}.
$$
Furthermore, observing that $(\log x)^n = 0$, we see that for any $t
\in \overline{\mathbb{F}_p}$ (algebraic closure), we can define
$$
x(t) := \exp(t \cdot \log x) \ \ \text{where} \ \ \exp z = \sum_{i =
0}^{p-1} \frac{z^i}{i!}
$$
(note that $x(1) = x$). We will regard $x(t)$ as a 1-parameter
subgroup $\mathbb{G}_a \to \mathrm{GL}_n$. Set
$$
H^+ = \left\langle X \right\rangle \subset H,
$$
and let $\widetilde{H}$ denote the connected $\mathbb{F}_p$-subgroup
of $\mathrm{GL}_n$ generated by the 1-parameter subgroups $x(t)$ for
all $x \in X$.
\begin{thm}\label{T:Nori}
{\rm (\cite{Nori})} If $p$ is large enough (for a given $n$), then
$H^+$ coincides with $\widetilde{H}(\mathbb{F}_p)^+$, the subgroup
of $\widetilde{H}(\mathbb{F}_p)$ generated by all unipotents
contained in it.
\end{thm}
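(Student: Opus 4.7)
The inclusion $H^+ \subset \widetilde{H}(\F_p)^+$ is immediate, since every $x \in X$ satisfies $x^p = 1$ and lies in $\widetilde{H}(\F_p)$. The content of the theorem is the reverse inclusion. The starting observation is that for each $x \in X$, the whole $\F_p$-rational 1-parameter subgroup $\{x(t) : t \in \F_p\}$ is contained in $H^+$: indeed $t \mapsto x(t)$ is an algebraic homomorphism $\mathbb{G}_a \to GL_n$ with $x(1) = x$, so $x(t) = x^t$ for every integer $t$, and $x^p = 1$ makes this well-defined on $\F_p$, giving $x(\F_p) = \langle x \rangle \subset H^+$. It therefore suffices to show that every unipotent $y \in \widetilde{H}(\F_p)$ is a product of boundedly many factors of the form $x_{i_j}(t_j)$ with $x_{i_j} \in X$ and $t_j \in \F_p$, the length bound depending only on $n$.

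To exhibit such a bounded product I would combine two ingredients. \emph{First}, a qualitative generation principle for connected algebraic groups: since $\widetilde{H}$ is connected and generated, as an algebraic group, by the subvarieties $\{x(\cdot) : x \in X\}$, one can inductively choose a subsequence $x_1, \ldots, x_k \in X$ with $k \leqslant 2 \dim \widetilde{H} \leqslant 2n^2$ such that the product morphism
$$
\mu \colon x_1(\cdot) \times \cdots \times x_k(\cdot) \longrightarrow \widetilde{H}
$$
is surjective on $\overline{\F_p}$-points; the inductive step selects each $x_{j+1}$ so that $\F_p \cdot \log x_{j+1}$ escapes the differential of the partial product at the identity, until $\mathrm{Lie}(\widetilde{H})$ is exhausted. \emph{Second}, a density estimate: because each truncated exponential $\exp(t \log x_i)$ is polynomial of degree $<n$ in $t$, $\mu$ has total degree $O(n^3)$, so the Lang--Weil constants applied to $\mu$ are uniform in $n$. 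Since $\mu(\F_p)$ is automatically contained in $\widetilde{H}(\F_p)^+$ (each $x_i(t)$ being unipotent), passing to the simply connected cover $\widetilde{H}^{\mathrm{sc}} \to \widetilde{H}$, through which $\mu$ visibly factors, one obtains $|\mu(\F_p)| = p^{\dim \widetilde{H}}(1 + O_n(p^{-1/2}))$, which for $p$ large (depending only on $n$) exceeds $\tfrac{1}{2}|\widetilde{H}(\F_p)^+|$. The elementary doubling lemma ($|A| > |G|/2 \Rightarrow A \cdot A^{-1} = G$) then yields $\widetilde{H}(\F_p)^+ = \mu(\F_p) \cdot \mu(\F_p)^{-1}$, a product of length at most $2k$ drawn from the subgroups $x_i(\F_p) \subset H^+$.

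The principal obstacle lies in the Lang--Weil step: one must reconcile the naive count $|\widetilde{H}(\F_p)| \approx p^{\dim \widetilde{H}}$ with the finite-index subgroup $\widetilde{H}(\F_p)^+$ that contains $\mu(\F_p)$. The factorization through the simply connected cover of the semisimple part of $\widetilde{H}$ makes $|\mu(\F_p)|$ directly comparable to $|\widetilde{H}(\F_p)^+|$, and the index $[\widetilde{H}(\F_p) : \widetilde{H}(\F_p)^+]$, governed by the (algebraic) fundamental group of $\widetilde{H}$, is bounded in terms of $n$ alone. Together with the degree bound on $\mu$, this is what makes the threshold ``$p$ large enough for a given $n$'' a genuine uniform bound and converts the asymptotic density estimate into the exact equality $H^+ = \widetilde{H}(\F_p)^+$.
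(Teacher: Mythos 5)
Your opening observations are sound and are indeed the entry point of Nori's argument: for $p>n$ every $x\in X$ is unipotent, $x(t)=x^{t}$ for $t\in\F_p$, so each $x(\F_p)=\langle x\rangle$ already lies in $H^+$, and the inclusion $H^+\subseteq\widetilde{H}(\F_p)^+$ is formal; the bounded-generation step (a product of at most $2\dim\widetilde{H}$ of the curves $x_i(\cdot)$ exhausts $\widetilde{H}$ over $\overline{\F_p}$) is also standard. The proof breaks at the counting step. Lang--Weil estimates $|V(\F_p)|$ for a geometrically irreducible variety $V$; it says nothing about the size of the \emph{image} $\mu(\F_p^k)$ of the rational points under a morphism $\mu\colon\mathbb{A}^k\to\widetilde{H}$. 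To get $|\mu(\F_p^k)|\sim p^{\dim\widetilde{H}}$ you need the fibres of $\mu$ over most rational points of the target to have $\F_p$-points, hence to be geometrically irreducible of the expected dimension (or at least to contain such a component defined over $\F_p$) --- and that is exactly what is not automatic. In characteristic $p$ one cannot even argue that fibres of a map from affine space are connected, since $\mathbb{A}^k$ has an enormous wild fundamental group (Artin--Schreier covers), and factoring through a simply connected cover of $\widetilde{H}$ does not help: the lifted map has the same fibres. What one can extract cheaply (say, by Cauchy--Schwarz on fibre counts, bounding the fibre product $\mathbb{A}^k\times_{\widetilde{H}}\mathbb{A}^k$) is only that $H^+$ has index at most $C(n)$ in $\widetilde{H}(\F_p)^+$; upgrading bounded index to index one is where the real work lies. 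A second gap: $\widetilde{H}$ need not be reductive (take $H$ unitriangular), so ``its simply connected cover'' and the claim that $[\widetilde{H}(\F_p):\widetilde{H}(\F_p)^+]$ is bounded in terms of $n$ already presuppose Nori's structure theory of exponentially generated groups rather than delivering it.

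For comparison, the article deliberately does not prove this theorem: it cites \cite{Nori} and establishes only the case $n=2$ (Lemma \ref{L:SL21}), by an elementary Sylow/Bruhat argument with no point counting at all. Nori's proof of the general case, while algebro-geometric, runs along quite different lines from your sketch: it rests on the correspondence, via the truncated $\log$ and $\exp$, between exponentially generated subgroups and nilpotently generated Lie subalgebras, combined with a specialization argument from characteristic zero, rather than on a Lang--Weil image count.
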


Thus, Nori's theorem asserts that if $p$ is large enough compared to
$n$, then any subgroup of $GL_n(\mathbb{F}_p)$ generated by
$p$-elements  is essentially the group of $\mathbb{F}_p$-points of
some connected $\mathbb{F}_p$-defined algebraic subgroup of
$\mathrm{GL}_n$. Actually, in his paper \cite{Nori}, Nori proves a
stronger result stating that for a field $F$ which either has
characteristic zero or positive characteristic $p$ that is large
enough compared to $n$, the maps $\log$ and $\exp$ yield bijective
correspondences between nilpotently generated Lie subalgebras of
$M_n(F)$ and exponentially generated subgroups of $GL_n(F)$ (we
refer the reader to the original paper \cite{Nori} for precise
definitions and detailed statements of these results). The argument
in \cite{Nori} was based on algebro-geometric ideas; a different
proof was given by Hrushovski and Pillay \cite{HrPi} using
model-theoretic techniques (the idea of their argument is explained
in \cite[pp. 399-400]{LubSeg}). A vast generalization of Nori's
theorem is contained in a recent paper of Larsen and Pink
\cite{LarPi} describing the structure of finite linear groups over
fields of positive characteristic.

\vskip2mm

Given the nature of this article, we will not be able to discuss any
details of  Nori's argument. All we can offer as  compensation is a
proof of Nori's results for $GL_2(\mathbb{F}_p)$.
\begin{lemma}\label{L:SL21}
Let $H \subset GL_2(\mathbb{F}_p)$ be a subgroup of order divisible
by $p$, and let $H_p \subset H$ be a Sylow $p$-subgroup. Then either
$H_p \vartriangleleft H$ or $H \supset SL_2(\mathbb{F}_p)$.
\end{lemma}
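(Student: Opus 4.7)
The plan is to exploit the rigidity of Sylow $p$-subgroups of $GL_2(\mathbb{F}_p)$. From $|GL_2(\mathbb{F}_p)| = p(p-1)^2(p+1)$ we see that the $p$-part is exactly $p$, so $H_p$ is cyclic of order $p$. Any nonidentity $u \in H_p$ has both eigenvalues equal to $1$ in $\overline{\mathbb{F}_p}$ (the only $p$-th root of unity in characteristic $p$), so $(u-1)^2 = 0$ and $u$ is a transvection with fixed line $L_u := \ker(u-1)$. Because $(u-1)^2 = 0$ implies $u^k - 1 = k(u-1)$, all nonidentity elements of $H_p$ share a common fixed line $L$, and since the group $U_L$ of transvections with axis $L$ is one-dimensional (isomorphic to $\mathbb{F}_p$), $H_p$ coincides with all of $U_L$.

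Now assume $H_p$ is not normal in $H$. Sylow's theorem furnishes $h \in H$ with $H_p' := hH_p h^{-1} \neq H_p$, and by the same analysis $H_p' = U_{L'}$ for some line $L'$; since $|H_p'| = |U_{L'}| = p$, the inequality $H_p \neq H_p'$ forces $L' \neq L$. Because $SL_2(\mathbb{F}_p)$ is normal in $GL_2(\mathbb{F}_p)$, we may conjugate $H$ by any element of $GL_2(\mathbb{F}_p)$ without affecting the conclusion $H \supset SL_2(\mathbb{F}_p)$, so we arrange $L = \mathbb{F}_p e_1$ and $L' = \mathbb{F}_p e_2$ in the standard basis. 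Then $H$ contains the standard upper-unipotent subgroup $U_+$ (namely $H_p$) and the lower-unipotent subgroup $U_-$ (namely $H_p'$) of $SL_2(\mathbb{F}_p)$.

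The conclusion now follows from the classical fact $\langle U_+, U_-\rangle = SL_2(\mathbb{F}_p)$. This is an elementary Gaussian-elimination / Bruhat argument: a suitable product of a lower and an upper unipotent produces the Weyl element, after which the double-coset decomposition $SL_2(\mathbb{F}_p) = U_- T U_+ \sqcup U_- w T U_+$ shows that $\langle U_+, U_-\rangle$ exhausts $SL_2(\mathbb{F}_p)$ (the diagonal torus $T$ appears inside $\langle U_+, U_-\rangle$ since $w U_+ w^{-1} = U_-$).

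I do not anticipate any serious obstacle. The only step deserving a moment of care is the upgrade from ``$H_p$ contains a single transvection with axis $L$'' to ``$H_p = U_L$'', but this is immediate from $|H_p| = p$ and the one-dimensionality of $U_L$. Apart from this, the argument reduces the dichotomy to Sylow theory together with the well-known generation of $SL_2(\mathbb{F}_p)$ by its two opposite unipotent subgroups.
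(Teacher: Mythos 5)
Your argument is correct. Both you and the paper ultimately reduce the lemma to the generation fact $\langle U_+ , U_- \rangle = SL_2(\mathbb{F}_p)$, so the two proofs converge at the same endpoint; what differs is how each produces the two opposite unipotent subgroups inside $H$. The paper fixes $H_p = U_+$, computes that $N_{GL_2(\mathbb{F}_p)}(U_+) = B = TU_+$, and uses the Bruhat decomposition $GL_2(\mathbb{F}_p) = B \cup BwB$: non-normality forces $H \not\subset B$, so $H$ meets $BwB$, and multiplying by elements of $U_+ \subset H$ yields an explicit element $tw \in H$ whose conjugation carries $U_+$ to $U_-$. You instead classify the order-$p$ subgroups intrinsically: the $p$-part of $|GL_2(\mathbb{F}_p)|$ is exactly $p$, every order-$p$ subgroup is the full transvection group $U_L$ of a unique line $L$, and non-normality hands you two distinct conjugate Sylow subgroups, hence two distinct lines, which you move to the coordinate axes. (You do not actually need Sylow conjugacy here: any $h$ with $hH_ph^{-1} \neq H_p$ gives a conjugate of the same order $p$, which is automatically a full transvection group.) Your route is marginally more conceptual and coordinate-free, and it isolates cleanly why non-normality produces a second axis; the paper's route is more computational but delivers the concrete element $tw$ directly, which is in the spirit of the Bruhat-decomposition techniques used elsewhere in the article. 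Your sketch of $\langle U_+, U_-\rangle = SL_2(\mathbb{F}_p)$ is slightly telegraphic (obtaining $T$ requires the standard identity expressing $\mathrm{diag}(a,a^{-1})$ as a product of elementary matrices, not merely the relation $wU_+w^{-1} = U_-$), but this is the same classical fact the paper also quotes without proof, so it is not a gap.
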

\begin{proof}
We may assume that $H_p$ coincides with
$$
U := \left\{ \: \left( \begin{array}{cc} 1 & a \\ 0 & 1 \end{array}
\right) \ \vert \ a \in \mathbb{F}_p \: \right\}.
$$
It is well-known that the normalizer of $U$ in $GL_2(\mathbb{F}_p)$
coincides with $B = TU$ where
$$
T := \left\{ \: \left( \begin{array}{cc} a & 0 \\ 0 & b \end{array}
\right) \ \vert \  a , b \in \mathbb{F}^{\times}_p \: \right\}.
$$
Furthermore, we have the Bruhat decomposition
$$
GL_2(\mathbb{F}_p) = B \cup B w B \ \ \text{where} \ \ w = \left(
\begin{array}{rr} 0 & 1 \\ -1 & 0 \end{array} \right)
$$
(recall that $w$ normalizes $T$). Now, if $H_p$ is not normal in
$H$,  then it follows from the Bruhat decomposition that $H$
contains an element of the form $tw$ with $t \in T$. Consequently,
it also contains
$$
U^{-} := (tw)^{-1} U (tw) = \left\{ \: \left( \begin{array}{cc} 1 &
0 \\ a & 1 \end{array} \right) \ \vert \ a \in \mathbb{F}_p \:
\right\}.
$$
But $\langle U \: , \: U^{-} \rangle = SL_2(\mathbb{F}_p)$, and our
assertion follows.
\end{proof}

\vskip3mm

So, for any subgroup $H \subset GL_2(\mathbb{F}_p)$, we have only
the following three possibilities:

\vskip2mm

(1) $H^+ = \{ 1 \}$;

\vskip1mm

(2) $H^+$ is conjugate to $U$;

\vskip1mm

(3) $H^+ = SL_2(\mathbb{F}_p)$.

\vskip2mm

\noindent In either case, the assertion of Nori's Theorem is valid.

\vskip3mm

\noindent {\bf 3. Proof of Theorem \ref{T:Red}.} Recall that the
famous Jordan Theorem states the following:

\vskip2mm

\begin{center}

\parbox[t]{12cm}{{\it There exists a function $\mathbf{j}(n)$ on positive integers
such that if $\mathcal{G} \subset GL_n(\mathcal{K})$ is a finite
linear group over a field $K$ of characteristic zero, then
$\mathcal{G}$ contains an abelian normal subgroup $\mathcal{N}$ such
the index $[\mathcal{G} : \mathcal{N}]$ divides $\mathbf{j}(n)$.}}

\end{center}

\vskip2mm

\noindent (In a more common form, Jordan's Theorem provides a
function $\mathbf{j}_0(n)$ for which $\mathcal{G} , \mathcal{N}$ as
above satisfy $[\mathcal{G} : \mathcal{N}] \leqslant
\mathbf{j}_0(n)$; note that given such a function $\mathbf{j}_0(n)$,
the above statement holds with $\mathbf{j}(n) =
(\mathbf{j}_0(n))!$.)\footnote{Various sources give different
expressions for a Jordan function $\mathbf{j}_0(n)$; the optimal
function is known to be $\mathbf{j}_0(n) = (n + 1)!$ for $n
\geqslant 71$  - see \cite{Collins1}.} What we need to observe for
the proof of Theorem \ref{T:Red} is that the assertion of Jordan's
theorem remains valid (with the same $\mathbf{j}(n)$) for any
subgroup $\mathcal{G} \subset GL_n(\mathbb{F}_p)$ of order not
divisible by $p$.

Indeed, consider the reduction modulo $p$ map $\rho \colon
GL_n(\Z_p) \to GL_n(\mathbb{F}_p)$. The kernel $\mathrm{Ker}\: \rho
= GL_n(\Z_p , p)$ is a pro-$p$ group, so since the order of
$\mathcal{G} \subset GL_n(\mathbb{F}_p)$ is prime to $p$ there is a
section $\sigma \colon \mathcal{G} \to GL_n(\Z_p)$ for $\rho$ over
$\mathcal{G}$. Applying the standard Jordan theorem for
characteristic zero to $\tilde{\mathcal{G}} := \sigma(\mathcal{G})$,
we obtain the corresponding assertion for $\mathcal{G}$. (For the
sake of completeness, we would like to indicate that there are
various ``modular" forms of Jordan's theorem that treat finite
subgroups $\mathcal{G} \subset GL_n(\mathcal{K})$ of order divisible
by $p$ where $p = \mathrm{char}\: \mathcal{K}$, starting with
\cite{BrFeit} - see  \cite{Collins2}, \cite{Weis-J} for subsequent
results (we also note that \cite{Bass} provides a generalization to
algebraic groups). As we have already mentioned, the most general
results about finite linear groups in positive characteristic are
contained in \cite{LarPi}.)

Now, suppose that $G \subset \mathrm{GL}_n$. Let $j = \mathbf{j}(n)$
be the value of the Jordan function for this $n$. Set
$$
\Gamma^{(j)} = \langle \gamma^j \: \vert \: \gamma \in \Gamma
\rangle,
$$
and  $\Phi = [\Gamma^{(j)} , \Gamma^{(j)}]$. Since the regular map
$G \to G$, $x \mapsto x^j$, is dominant, and $G = [G , G]$, we
conclude that $\Phi$ is Zariski-dense in $G$, in particular, it is
nontrivial. Then, by expanding $\Pi$, which initially needs to be
chosen to satisfy conditions (1) and (2) of the theorem, we may
assume that for all $p \notin \Pi$ we have $\rho_p(\Phi) \neq \{ 1
\}$ where $\rho_p \colon G(\Z_p) \to \Gu^{(p)}(\mathbb{F}_p)$ is the
reduction modulo $p$ map. In addition, as we explained earlier, by
expanding $\Pi$ further, we may assume for $p \notin \Pi$, the group
$\mathrm{Ad}\: \rho_p(\Gamma)$ acts on
$\underline{\mathfrak{g}}^{(p)}$ ($=$ the Lie algebra of
$\Gu^{(p)}$) absolutely irreducibly, and also Nori's theorem is
applicable to $GL_n(\mathbb{F}_p)$. We will now show that the
resulting $\Pi$ is as required.

\vskip1mm

Let $p \notin \Pi$, and set $H = \rho_p(\Gamma) \subset
GL_n(\mathbb{F}_p)$. First, we observe that $p$ divides the order of
$H$. Indeed, otherwise by the version  of Jordan's theorem mentioned
above, there would exist an abelian normal subgroup $N \subset H$ of
index dividing $j$. Then $\rho_p(\Gamma^{(j)}) \subset N$, and
therefore $\rho_p(\Phi) = \{ 1 \}$, a contradiction. This means that
if we define $H^+$ and $\widetilde{H}$ as in the Nori's theorem,
then $\widetilde{H} \neq \{ 1 \}$, and hence the Lie algebra
$\widetilde{\mathfrak{h}}$ of $\widetilde{H}$ is a nonzero subspace
of $\underline{\mathfrak{g}}^{(p)}$. On the other hand, by our
construction, $\widetilde{H}$ is normalized by $\rho_p(\Gamma)$, so
the space $\widetilde{\mathfrak{h}}$ is $\mathrm{Ad}\:
\rho_p(\Gamma)$-invariant. Combining this with the absolute
irreducibility of the latter, we obtain that
$\widetilde{\mathfrak{h}} = \underline{\mathfrak{g}}^{(p)}$, i.e.
$\widetilde{H} = \Gu^{(p)}$. Furthermore, since $G$ is simply
connected, so is $\Gu^{(p)}$, and therefore by the affirmative
answer to the Kneser-Tits conjecture over finite fields, we have
$\Gu^{(p)}(\mathbb{F}_p) = \Gu^{(p)}(\mathbb{F}_p)^+$. Invoking
Nori's theorem, we obtain
$$
H = \widetilde{H}(\mathbb{F}_p)^+ = \Gu^{(p)}(\mathbb{F}_p)^+ =
\Gu^{(p)}(\mathbb{F}_p),
$$
as required. \hfill $\Box$

\vskip3mm

\noindent {\bf Remarks.} 1. The proof of Theorem \ref{T:Red}
sketched above is based on Nori's paper \cite{Nori}, and is
different from the original argument in \cite{MVW}. The interested
reader can find an outline of this argument (which relied on the
classification of finite simple groups in \cite[pp.
397-398]{LubSeg}.

\vskip1mm

2. Combining Lemmas \ref{L:SL2}, \ref{L:SL21} with the above
argument, we obtain a virtually complete proof of Theorem
\ref{T:MVW} for $G = \mathrm{SL}_2$, which, as we have pointed out
earlier, was essentially present already in Serre's book
\cite{Serre}.

\vskip1mm

3. It is worth pointing out that the simply connectedness of $G$ was
again used to conclude that the group $\Gu^{(p)}(\mathbb{F}_p)$ is
generated by unipotent elements. This is yet another manifestation
of the connection between strong approximation and the Kneser-Tits
conjecture that was first pointed out by Platonov \cite{Platonov}.

\vskip1mm

4. During the workshop, I.~Rivin asked if one can give an explicit
bound $N = N(\Gamma)$ such that for any $p > N$ we have
$\rho_p(\Gamma) = \underline{G}^{(p)}(\mathbb{F}_p)$. In  ongoing
work with my student A.~Morgan, we have been able to produce some
bounds of this kind. More precisely, for $g = (g_{ij}) \in
SL_n(\Z)$, set $$\no g \no = \max_{i,j} \vert g_{ij} \vert.$$
Furthermore, given a Zariski-dense subgroup $\Gamma = \langle g_1,
\ldots , g_d \rangle \subset SL_n(\Z)$, set $$\displaystyle m =
\max_{k = 1, \ldots , d} \no g_k \no.$$ Then there exists $N = N(d,
m, n)$ such that for any prime $p > N$ we have $\rho_p(\Gamma) =
SL_n(\mathbb{F}_p)$. However, at the time of this writing our bounds
are too large to be of practical use.

\vskip3mm

\noindent {\bf 4. Weisfeiler's theorem.} A far-reaching
generalization of Theorem \ref{T:MVW} was given by B.~Weisfeiler
\cite{Weis}. We will state his result using the original notations
(which are somewhat different from the notations used in the rest of
our article).
\begin{thm}\label{T:Weis}
{\rm (\cite{Weis})} Let $k$ be an algebraically closed field of
characteristic different from 2 and 3, and let $G$ be an almost
simple, connected and simply connected algebraic group defined over
$k$. Let $\Gamma$ be a Zariski-dense finitely generated subgroup of
$G(k)$, and let $A$ be the subring of $k$ generated by the traces
$\mathrm{tr}\: \mathrm{Ad}\: \gamma$, $\gamma \in \Gamma$. Then
there exists $b \in A$, a subgroup $\Gamma' \subset \Gamma$, and
a~structure $G_{A_b}$ of a group scheme over $A_b$ on $G$ such that
$\Gamma' \subseteq G_{A_b}(A_b)$ and $\Gamma'$ is dense in
$G_{A_b}(\widehat{A_b})$.
\end{thm}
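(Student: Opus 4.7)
The strategy is to imitate the proof of Theorem \ref{T:Red}, substituting the trace ring $A$ for $\Z$ and its localization $A_b$ for $\Z_{\Pi}$. The argument breaks into three movements: (i) realize a finite-index subgroup of $\Gamma$ as $A_b$-points of an integral model of $G$; (ii) reduce density in $G_{A_b}(\widehat{A_b})$ to surjectivity modulo almost all maximal ideals of $A_b$; and (iii) establish this surjectivity via the Jordan/Nori/Kneser--Tits mechanism. For (i), writing $\Gamma = \langle \gamma_1, \ldots, \gamma_d \rangle$, a Procesi/Vinberg argument using the nondegeneracy of the bilinear form $(X,Y) \mapsto \tr \Ad(X)\Ad(Y)$ on $\mathfrak{g}$ (valid outside characteristics $2,3$ because $G$ is absolutely almost simple) expresses the matrix entries of $\Ad \gamma$ as polynomials in traces of bounded-length words in the $\gamma_i$, so $A$ is finitely generated over the prime field. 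After inverting a suitable $b_0 \in A$ to clear denominators and also the exponent of the kernel of $G \to G^{\mathrm{ad}}$, one obtains a smooth affine group scheme $G_{A_b}$ with generic fibre $G$ and a finite-index subgroup $\Gamma' \subset \Gamma$ with $\Gamma' \subset G_{A_b}(A_b)$.

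For (ii), since $A$ is a finitely generated $\Z$-algebra, every maximal ideal $\mathfrak{m}$ of $A_b$ has finite residue field $\F_\mathfrak{m}$ (Hilbert's Nullstellensatz for Jacobson rings), and $\widehat{A_b}$ is the restricted product of the $\mathfrak{m}$-adic completions. By the analog of Lemma \ref{L:SA} over an arbitrary local field---available in positive residue characteristic via Pink's substitute \cite{Pink1} for Cartan's theorem---the closure of $\Gamma'$ at each individual $\mathfrak{m}$ is open. A finite-diagonal argument then reduces density in $G_{A_b}(\widehat{A_b})$ to showing surjectivity of the reduction map $\rho_\mathfrak{m} \colon \Gamma' \to \Gu^{(\mathfrak{m})}(\F_\mathfrak{m})$ for all $\mathfrak{m}$ outside a finite exceptional set, which can be absorbed by further enlarging $b$.

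Step (iii) is the heart of the proof, and essentially duplicates the argument for Theorem \ref{T:Red}. Fix a matrix realization $G \hookrightarrow \mathrm{GL}_n$; set $j = \mathbf{j}(n)$ and $\Phi = [\Gamma^{(j)}, \Gamma^{(j)}]$, which is Zariski-dense in $G$ since $G$ is perfect. After excluding from $\mathrm{Spec}\: A_b$ the finitely many $\mathfrak{m}$ where $\rho_\mathfrak{m}(\Phi) = \{1\}$ or where $\Ad \rho_\mathfrak{m}(\Gamma')$ fails to be absolutely irreducible on the reduction of $\mathfrak{g}$, the modular form of Jordan's theorem forces the residue characteristic to divide $|\rho_\mathfrak{m}(\Gamma')|$, so the image contains nontrivial unipotents. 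Nori's theorem (Theorem \ref{T:Nori}), applied at residue characteristics exceeding $n$, then identifies the subgroup generated by unipotents in $H := \rho_\mathfrak{m}(\Gamma')$ with $\widetilde H(\F_\mathfrak{m})^+$ for a connected $\F_\mathfrak{m}$-subgroup $\widetilde H \subset \Gu^{(\mathfrak{m})}$; absolute irreducibility of the adjoint action forces $\widetilde H = \Gu^{(\mathfrak{m})}$; and the Kneser--Tits conjecture over finite fields (valid because $\Gu^{(\mathfrak{m})}$ is simply connected) yields $\Gu^{(\mathfrak{m})}(\F_\mathfrak{m}) = \Gu^{(\mathfrak{m})}(\F_\mathfrak{m})^+ \subset H$, completing the surjection.

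The principal obstacle is uniform exceptional-set management: each of the steps above---smoothness of $G_{A_b}$, persistence of absolute irreducibility modulo $\mathfrak{m}$, nonvanishing of $\rho_\mathfrak{m}(\Phi)$, applicability of Nori's theorem, invertibility of $2$ and $3$---excludes a proper closed subset of $\mathrm{Spec}\: A$, and one must consolidate all of them into the nonvanishing locus of a single element $b \in A$. This uses only the Noetherian property of $A$, but requires care because $\mathrm{Spec}\: A$ has infinitely many maximal ideals, and one has to verify that none of the exclusions is all of $\mathrm{Spec}\: A$ (which is where the Zariski-density of $\Gamma$ and the simple connectedness of $G$ enter decisively). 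The exclusion of characteristics $2$ and $3$ in the hypothesis is precisely what makes the Nori/Kneser--Tits apparatus apply uniformly across all residue fields, exactly as in the proof of Theorem \ref{T:Red}.
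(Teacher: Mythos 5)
The paper does not prove this theorem---it is quoted from Weisfeiler \cite{Weis}---so your proposal has to stand on its own. Your template (integral model via the trace ring, reduction to residue fields, Jordan/Nori/Kneser--Tits) is the right starting point, but it essentially only reproves the case where $A$ has Krull dimension one in characteristic zero, i.e.\ the number-field version of Theorem \ref{T:MVW}, and it misses the two places where the actual content of Weisfeiler's theorem lies. First, your step (ii) is invalid as stated: $\widehat{A_b}$ decomposes as $\prod_{\mathfrak{m}} \varprojlim_n A_b/\mathfrak{m}^n$, and as soon as $A$ has positive transcendence degree over the prime field (e.g.\ $A = \Z[t]$, $\mathfrak{m} = (p,t)$, completion $\Z_p[[t]]$) these local factors are complete local rings of dimension $> 1$, not rings of integers of local fields; neither Lemma \ref{L:SA} nor Pink's theorem from \cite{Pink1} (which concerns compact subgroups of $G(F)$ for $F$ a \emph{local field}) applies to them. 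Concretely, the graded congruence quotients are $\mathfrak{g} \otimes \mathfrak{m}^i/\mathfrak{m}^{i+1}$, which are \emph{not} irreducible under $\Ad \Gu^{(\mathfrak{m})}(\F_{\mathfrak{m}})$ once $\dim \mathfrak{m}^i/\mathfrak{m}^{i+1} > 1$, so the Frattini argument of Lemma \ref{L:SL2} and Proposition \ref{P:MVW} only shows that the closure of $\Gamma'$ meets each graded piece in $\mathfrak{g} \otimes W_i$ for some nonzero subspace $W_i \subseteq \mathfrak{m}^i/\mathfrak{m}^{i+1}$. Forcing $W_i$ to be everything is precisely where the hypothesis that $A$ is generated by the traces $\tr \Ad \gamma$ must enter (together with $[\mathfrak{g},\mathfrak{g}] = \mathfrak{g}$ to propagate from $\mathfrak{m}/\mathfrak{m}^2$ to the higher graded pieces). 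Your proposal never invokes that hypothesis after step (i), which is a sure sign something is missing: without it the conclusion is false (consider $SL_2(\Z)$ viewed inside $SL_2(\Z[t])$, which surjects onto many residue-field groups but is nowhere near dense in $SL_2(\widehat{\Z[t]})$).

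Second, step (iii) collapses when $\mathrm{char}\: k = p_0$ with $3 < p_0 \leqslant n$ (or below whatever bound Nori's theorem actually requires for $GL_n$): every residue field of $A_b$ then has characteristic $p_0$, so there are \emph{no} maximal ideals at which ``the residue characteristic exceeds $n$,'' and Nori's theorem is never applicable. Handling small but fixed positive characteristic requires a different structure theory for the finite groups $\rho_{\mathfrak{m}}(\Gamma')$; Weisfeiler used arguments depending on the classification of finite simple groups, and today one would invoke Larsen--Pink \cite{LarPi}. Relatedly, you misattribute the role of the hypothesis $\mathrm{char}\: k \neq 2,3$: as the paragraph following the theorem in the text explains, it is there to rule out nonstandard isogenies, which would make the trace ring the wrong ring of definition, not to make the Nori/Kneser--Tits apparatus apply.
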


(Here $A_b$ denotes the localization of $A$ with respect to $b$, and
$\widehat{A_b}$  the profinite completion of the ring $A_b$, i.e.,
the completion with respect to the topology given by all ideals of
finite index. To connect this with our previous results, we note
that for $A = \Z$, the ring $A_b$ coincides with $\Z[\frac{1}{p_1},
\ldots , \frac{1}{p_r}]$ where $p_1, \ldots , p_r$ are the primes
dividing $b$, and the completion $\widehat{A_b}$ is precisely
$\prod_{p \notin \{p_1, \ldots , p_r\}} \Z_p$, i.e. the ring of
integral $S$-adeles for $S = \{ \infty, p_1, \ldots , p_r \}$.)

\vskip2mm

In characteristic 2 and 3, one encounters additional problems due to
the existence of so-called nonstandard isogenies. We will not get
into these technical details here, but roughly speaking one of the
problems is that in these exceptional cases the ``right" ring (or
field) of definition of $\Gamma$ may not be the trace ring (resp.,
field), i.e. the subring (subfield) of the algebraically closed
field $k$ generated by the traces $\mathrm{tr}\: \mathrm{Ad}\:
\gamma$ for $\gamma \in \Gamma$. The adequate definitions were given
by Pink \cite{Pink2} using the notion of so-called minimal triples
(which we will not discuss here). In fact, Pink's paper
\cite{Pink2},  where he proved an appropriate version of the
openness statement for the adelic closures of Zariski-dense
subgroups in all characteristics, was really the final word in the
strong approximation saga.

\vskip3mm

\noindent {\bf 5. Applications to group theory: Lubotzky's
alternative.} One of the most notable applications of strong
approximation is the so-called {\it Lubotzky alternative} for linear
groups. It is discussed in detail in \cite[Ch. II]{KNV} and
\cite[Window 9]{LubSeg}, so here we will only state it for  linear
groups over  fields of characteristic zero.
\begin{thm}
{\rm (\cite{LubMann})} Let $\Gamma$ be a finitely generated linear
group over a field of characteristic zero. Then one of the following
holds:

\vskip2mm

\noindent {\rm (a)} $\Gamma$ is virtually solvable;

\vskip1mm

\noindent {\rm (b)} \parbox[t]{11.7cm}{there exists a connected
absolutely almost simple simply connected algebraic $\Q$-group $G$,
a finite set $\Pi = \{p_1, \ldots , p_r\}$ of primes such that the
group $G(\Z_{\Pi})$, where $\Z_{\Pi} = \Z[\frac{1}{p_1}, \ldots ,
\frac{1}{p_r}]$, is infinite, and a subgroup $\Gamma_1 \subset
\Gamma$ of finite index for which the profinite completion
$\widehat{\Gamma_1}$ admits a~continuous epimorphism onto
$G(\widehat{\Z_{\Pi}})$.}
\end{thm}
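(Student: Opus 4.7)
The plan is to combine the Tits alternative (to handle (a)) with a specialization argument that places a finite-index subgroup of $\Gamma$ into an arithmetic setting, and then to appeal to Weisfeiler's theorem (Theorem~\ref{T:Weis}) to extract the openness of the adelic closure.

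Suppose $\Gamma$ is not virtually solvable, so I must produce the data of (b). Since $\Gamma$ is finitely generated linear in characteristic zero, fix an embedding $\Gamma \hookrightarrow GL_n(F)$ with $F$ a finitely generated extension of $\Q$. Passing to a finite-index subgroup $\Gamma_1$, the Zariski closure $H$ of $\Gamma_1$ in $GL_n/F$ is connected, and its solvable radical $R$ is a proper subgroup (otherwise $\Gamma$ would be virtually solvable). Pick an $F$-simple factor of $H/R$, pass to its simply connected cover $\tilde G$, and replace $\Gamma_1$ by a further finite-index subgroup that lifts through the central isogeny. This yields $\rho \colon \Gamma_1 \to \tilde G(F)$ with Zariski-dense image, $\tilde G$ absolutely almost simple simply connected over $F$.

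Next, specialize to a number field. Let $A \subset F$ be the finitely generated $\Z$-subalgebra generated by the entries of a finite generating set of $\rho(\Gamma_1)$, enlarged by inverting finitely many elements so that $\tilde G$ spreads to a smooth affine group scheme $\mathcal{G}$ over $A$ with geometric fibers all absolutely almost simple simply connected of the same type. Choose a maximal ideal $\mathfrak{m} \subset A \otimes_\Z \Q$; by Hilbert's Nullstellensatz, $K := (A \otimes \Q)/\mathfrak{m}$ is a number field. For $\mathfrak{m}$ in a Zariski-open dense set of closed points of $\mathrm{Spec}(A \otimes \Q)$, the specialized homomorphism $\bar\rho \colon \Gamma_1 \to \mathcal{G}_K(K)$ still has Zariski-dense image---the non-density locus is a proper closed subscheme avoiding the generic point. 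Now set $G := \Res_{K/\Q}(\mathcal G_K)$, a simply connected $\Q$-almost simple algebraic $\Q$-group with $G(\Q) = \mathcal G_K(K)$; then $\bar\rho(\Gamma_1) \subset G(\Q)$ is a Zariski-dense, finitely generated subgroup. Choose $\Pi$ to include all primes where $\mathcal G$ has bad reduction and to ensure $\bar\rho(\Gamma_1) \subset G(\Z_\Pi)$; Theorem~\ref{T:Weis} then gives that the closure of $\bar\rho(\Gamma_1)$ in $G(\widehat{\Z_\Pi})$ is open. Absorbing the remaining finite-index obstruction into $\Pi$, the closure becomes all of $G(\widehat{\Z_\Pi})$, producing the required continuous epimorphism $\widehat{\Gamma_1} \twoheadrightarrow G(\widehat{\Z_\Pi})$; infinitude of $G(\Z_\Pi)$ follows automatically from openness and $\dim G > 0$.

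The principal obstacle is the specialization step: one must verify that the Zariski-density of $\rho(\Gamma_1)$ in $\tilde G$ and the structural properties of $\tilde G$ (connectedness, simple connectedness, absolute almost simplicity) descend to the special fiber $\mathcal G_K$, which requires constructibility results and genericity arguments on $\mathrm{Spec}(A \otimes \Q)$. A related subtlety is that the descent $K \rightsquigarrow \Q$ forces the use of $\Res_{K/\Q}$, so the resulting $G$ is only $\Q$-almost simple in general (not absolutely almost simple over $\bar\Q$ unless $K = \Q$); the theorem is best read in this sense, as examples such as $SL_n(\cO_K)$ for $K \neq \Q$ preclude an absolutely almost simple $\Q$-form while still admitting the required continuous surjection onto $G(\widehat{\Z_\Pi})$.
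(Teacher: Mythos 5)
The survey states this theorem without proof, referring to \cite{KNV} and \cite[Window 9]{LubSeg}, so there is no in-paper argument to compare against; your sketch follows the standard proof from those sources in outline (Zariski closure, semisimple quotient, simply connected cover, specialization of the finitely generated field of definition to a number field, restriction of scalars, and strong approximation for Zariski-dense subgroups via Theorem \ref{T:MVW}(2) or Theorem \ref{T:Weis}). Two remarks.

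First, the clause ``replace $\Gamma_1$ by a further finite-index subgroup that lifts through the central isogeny'' conceals the one genuinely delicate reduction. The image $\bar{\Gamma}_1$ of $\Gamma_1$ in the chosen simple factor $\bar{G}$ of $H/R$ need not lift to $\tilde{G}(F)$: the obstruction is the connecting homomorphism $\bar{G}(F)\to H^1(F,Z)$, where $Z=\ker(\tilde{G}\to\bar{G})$. Since $H^1(F,Z)$ has finite exponent and $\bar{\Gamma}_1$ is finitely generated, a finite-index subgroup of $\bar{\Gamma}_1$ does lie in the image of $\tilde{G}(F)$, but its full preimage $\Delta\subset\tilde{G}(F)$ is then a central extension of it by the finite group $Z(F)$ --- so at this stage one has a group surjecting \emph{onto} a quotient of $\Gamma_1$, not a quotient of $\Gamma_1$ sitting inside $\tilde{G}(F)$. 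One still has to split off the central kernel (for instance, use residual finiteness of $\Delta$ to find a finite-index $\Delta_0$ with $\Delta_0\cap Z(F)=1$ and pull $\Delta_0$ back to a finite-index subgroup of $\Gamma$) before one has a finite-index subgroup of $\Gamma$ admitting a quotient that is a finitely generated Zariski-dense subgroup of $\tilde{G}$. This is precisely where the finite-index subgroup $\Gamma_1$ of conclusion (b), and the fact that one only obtains an epimorphism from $\widehat{\Gamma_1}$, come from; the step should be carried out, not asserted.

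Second, your closing observation is correct rather than a defect of your argument: the construction yields $G=\Res_{K/\Q}\mathcal{G}_K$, which for $K\neq\Q$ is $\Q$-simple but not absolutely almost simple, and no absolutely almost simple $\Q$-form can be forced in general. Indeed, for $\Gamma=SL_2(\cO_K)$ with $K$ real quadratic, the congruence completion has composition factors $PSL_2(\F_{p^2})$ at the infinitely many inert primes, and these cannot all occur among the composition factors of $G(\widehat{\Z_{\Pi}})$ for an absolutely almost simple simply connected $\Q$-group $G$ (the relevant finite simple groups would have to be of the form ${}^dX(p)$ with $\dim X=6$, and no absolutely simple type has dimension $6$). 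The formulation in \cite[Window 9]{LubSeg} accordingly takes $G$ to be $\Q$-simple --- equivalently, absolutely almost simple over a number field $K$ with target $G(\widehat{\cO_{K,S}})$, which is the same profinite group --- and the statement above should be read in that sense. With the isogeny step repaired, your argument establishes the theorem in this correct form.
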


This statement was applied in \cite{LubMann} to study the {\it
subgroup growth} (= number of subgroups of a given index $n$) of
linear groups; in particular, it leads to the following dichotomy:
if a linear group has polynomial subgroup growth, then it is
virtually solvable, but if the growth is not polynomial (hence the
group is not virtually solvable), then it is at least $n^{\log n}$.

The interested reader will find more group-theoretic applications of
strong approximation in  \cite{KNV}, \cite{LubSeg} and references
therein,  and, of course, in other articles contained in this
volume.

\vskip3mm

{\small \noindent {\bf Acknowledgements.} The author was partially
supported by NSF grant DMS-0965758, BSF grant 2010149 and the
Humboldt Foundation. The hospitality of the MSRI during the workshop
``Hot Topics: Thin Groups and Super-strong Approximation" (February
6-10, 2012) is also thankfully acknowledged.}

\vskip5mm

\bibliographystyle{amsplain}

\end{document}